\newtheorem{proposition}{\textbf{Proposition}}
\newtheorem{corollary}[proposition]{\textbf{Corollary}}
\newtheorem{theorem}[proposition]{\textbf{Theorem}}
\theoremstyle{definition}
\newtheorem{definition}[proposition]{\textbf{Definition}}
\newtheorem{remark}[proposition]{\textbf{Remark}}
\numberwithin{proposition}{section}
\begin{document}

\title{{Harmonic Maps and Hypersymplectic Geometry}}

\author{Markus R\"oser} 
\address{Mathematisches Institut \\ Westf\"alische Wilhelms-Universit\"at M\"unster \\
Einsteinstrasse 62, M\"unster, Germany} 
\email{mroes\_02@ uni-muenster.de}

\maketitle
\begin{abstract}
We study the hypersymplectic geometry of the moduli space of solutions to Hitchin's harmonic map equations on a $G$-bundle. This is the split-signature analogue of Hitchin's Higgs bundle moduli space. Due to the lack of definiteness, this moduli space is globally not well-behaved. However, we are able to construct a smooth open set consisting of solutions with small Higgs field, on which we can investigate the hypersymplectic geometry. Finally, we reinterpret our results in terms of the Riemannian geometry of the moduli space of $G$-connections.
\end{abstract}

\tableofcontents
Keywords:
Hypersymplectic Geometry, Gauge Theory, Moment Map, Moduli Space

Mathematics Subject Classifiaction:  53C25,   53C07, 53D30, 3D20

\section{Introduction}
A \emph{hypersymplectic manifold} is a quintuple $(M^{4k},g,I,S,T)$, where $M$ is a $4k$-dimensional real manifold, $g$ is a pseudo-Riemannian metric of signature $(2k,2k)$ and $I,S,T$ are skew-adjoint sections of $\mathrm{End}(TM)$ such that 
$$S^2 = T^2 = \mathrm{id}_{TM} = - I^2\qquad IS = T = -SI,$$
and $$\nabla^g I = \nabla^g S = \nabla^gT = 0,$$
where $\nabla^g$ is the Levi-Civita connection associated with $g$. 
The skew-adjointness and the covariant constancy of $I,S,T$ imply that 
$$\omega_I = g(I-,-) \qquad \omega_S = g(S-,-) \qquad \omega_T = g(T-,-)$$
define symplectic forms on $M$, hence the name hypersymplectic. The above definition is reminiscent of the definition of a hyperk\"ahler manifold and the existence of the parallel endomorphisms $I,S,T$ implies that the holonomy of $g$ is contained in the non-compact Lie group $\mathrm{Sp}(2k,\mathbb R)$, which is the split-real form of $\mathrm{Sp}(2k,\mathbb C)$. In this way hypersymplectic manifolds are neutral-signature cousins of hyperk\"ahler manifolds, whose holonomy is contained in the compact real form $\mathrm{Sp}(k)$. Due to the common complexification of the holonomy groups, many facts from hyperk\"ahler geometry carry over to hypersymplectic manifolds. In particular, hypersymplectic manifolds are complex symplectic and Ricci-flat. 

In the hyperk\"ahler situation we have a whole two-sphere of compatible K\"ahler structures, the unit sphere in the three-dimensional real vector space spanned by $I,J,K$, which we may identify with the space of imaginary quaternions.
Hypersymplectic structures, however, are in some sense less symmetric, since we do not only deal with in fact a whole two-sheeted hyperboloid of complex structures, but also with a connected hyperboloid of product structures. Again, these hyperboloids can be thought of as spheres in the three-dimensional vector space spanned by $I,S,T$, which carries naturally a metric of Lorentz signature. The hyperboloids mentioned above then correspond to the subsets of spacelike, respectively timelike, unit vectors. So there are always two different ways to look at hypersymplectic manifolds. One can study them from the point of view of complex, or in fact pseudok\"ahler, geometry. Or one can focus on the product structures and investigate the parak\"ahler geometry.  

A powerful tool to construct examples of hypersymplectic manifolds is the hypersymplectic quotient construction, which is an adaptation of the symplectic reduction of Marsden and Weinstein to the hypersymplectic setting. It is closely analogous to the hyperk\"ahler quotient construction, however, the non-trivial signature of the hypersymplectic metric gives rise to pathologies not present in the hyperk\"ahler case. The precise statement is the following. 

\begin{theorem}[\cite{Hitchin:1990a}]
Let $(M,g,I,S,T)$ be a hypersymplectic manifold and let $G$ be a Lie group which acts on $M$ preserving the hypersymplectic structure. Suppose the action is hamiltonian with respect to each of the symplectic structures $\omega_i$ with moment maps $\mu_i: M \to \mathfrak g^*$ for $i\in\{I,S,T\}$. Define the hypersymplectic moment map 
$$\mu = (\mu_I,\mu_S,\mu_T): M \to \mathfrak g \otimes \mathbb R^3,$$
and assume that 
\begin{enumerate}
\item $c \in Z(\mathfrak g^*) \otimes \mathbb R^3$ is a regular value of $\mu$, \label{s}
\item $G$ acts freely and properly on $\mu^{-1}(c)$, \label{f}
\item the metric $g$ restricted to the tangent spaces to the $G$-orbits in $\mu^{-1}(c)$ is non-degenerate. \label{d}
\end{enumerate}
Then the quotient metric on $\mu^{-1}(c)/G$ is again hypersymplectic and the symplectic forms on the quotient $\tilde\omega_I,\tilde\omega_S,\tilde\omega_T$ satisfy $i^*\omega_i = p^*\tilde\omega$ for all $i\in\{I,S,T\}$, where $p:\mu^{-1}(c) \to \mu^{-1}(c)/G$ is the projection and $i: \mu^{-1}(c) \to M$ is the inclusion map.
\end{theorem}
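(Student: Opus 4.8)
\emph{Proof proposal.} The plan is to follow the familiar path of the hyperk\"ahler quotient, doing the linear algebra carefully in the indefinite setting and isolating the one genuinely non-formal input. First, hypothesis (\ref{s}) makes $N:=\mu^{-1}(c)$ a smooth submanifold of $M$ with $T_mN=\ker d\mu_m$, and hypothesis (\ref{f}) makes $p\colon N\to M':=N/G$ a principal $G$-bundle, so that $M'$ is a smooth manifold. Write $X_\xi$ for the fundamental vector field of $\xi\in\mathfrak g$ and $V_m=\{X_\xi|_m:\xi\in\mathfrak g\}=\mathfrak g\cdot m$ for the vertical space. From the moment map identity $\iota_{X_\xi}\omega_i=d\langle\mu_i,\xi\rangle$ one reads off $\ker d\mu_m=(IV_m+SV_m+TV_m)^{\perp_g}$ for $i\in\{I,S,T\}$, while equivariance of $\mu_i$ together with the centrality of $c$ forces $d\mu_i|_m(X_\xi)=\pm\langle c,[\cdot,\xi]\rangle=0$, hence both $V_m\subseteq\ker d\mu_m$ and $\omega_i(X_\xi,X_\eta)=0$ along $N$.

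Next I would carry out the pointwise linear algebra. Using skew-adjointness of $I,S,T$, the relations $IS=T=-SI$ and $S^2=T^2=\mathrm{id}=-I^2$, and the vanishing $\omega_i(X_\xi,X_\eta)=0$, one checks that the four subspaces $V_m,\ IV_m,\ SV_m,\ TV_m$ are mutually $g$-orthogonal. Since $I$ is a $g$-isometry and $S,T$ are $g$-anti-isometries, the non-degeneracy of $g|_{V_m}$ granted by hypothesis (\ref{d}) propagates to all four summands, so $W_m:=V_m\oplus IV_m\oplus SV_m\oplus TV_m$ is a non-degenerate, $I,S,T$-invariant subspace of neutral signature; consequently $W_m^{\perp_g}$ is also $I,S,T$-invariant with non-degenerate induced metric. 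Because $IV_m+SV_m+TV_m\subseteq W_m$ we get $W_m^{\perp_g}\subseteq\ker d\mu_m$, and an elementary computation inside $W_m$ (testing orthogonality to $IV_m,SV_m,TV_m$ and using non-degeneracy of $g|_{V_m}$) shows $\ker d\mu_m\cap W_m=V_m$; hence $\ker d\mu_m=V_m\oplus W_m^{\perp_g}$. Therefore $dp_m$ restricts to an isomorphism $W_m^{\perp_g}\to T_{[m]}M'$, and transporting $g,I,S,T$ along it produces tensors $g',I',S',T'$ on $M'$ with $S'^2=T'^2=\mathrm{id}=-I'^2$, $I'S'=T'=-S'I'$, with $I',S',T'$ skew-adjoint for $g'$ and $g'$ of neutral signature $(2k-2\dim G,\,2k-2\dim G)$. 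These are well defined and smooth because $g,I,S,T$ and the horizontal distribution $W^{\perp_g}$ are all $G$-invariant along $p$.

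It then remains to match the symplectic forms and to prove parallelism. For $\tilde\omega_i:=g'(i'-,-)$ one verifies $p^*\tilde\omega_i=i^*\omega_i$ by evaluating on horizontal and vertical vectors: on $W_m^{\perp_g}$ it is the transported form; a horizontal--vertical pair pairs to zero since $W^{\perp_g}$ is $I,S,T$-invariant and $g$-orthogonal to $V$; and two vertical vectors pair to zero by $\omega_i(X_\xi,X_\eta)=0$. Since $d\omega_i=0$ on $M$ and $p$ is a surjective submersion (so $p^*$ is injective on forms), $p^*d\tilde\omega_i=i^*d\omega_i=0$ gives $d\tilde\omega_i=0$; with the non-degeneracy of $g'$ this shows $\tilde\omega_{I'},\tilde\omega_{S'},\tilde\omega_{T'}$ are symplectic and establishes the stated relation $i^*\omega_i=p^*\tilde\omega_i$.

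Finally, $(M',g',I',S',T')$ is an almost-hypersymplectic manifold whose three fundamental $2$-forms are closed, and I would deduce $\nabla^{g'}I'=\nabla^{g'}S'=\nabla^{g'}T'=0$ by invoking the split-signature analogue of the classical fact that an almost-hyperk\"ahler structure with closed K\"ahler forms is hyperk\"ahler: closedness forces integrability of $I'$ via Newlander--Nirenberg and integrability of the product structures $S',T'$ via Frobenius, after which a closed compatible fundamental form makes each of them (para-)K\"ahler, i.e. parallel. This last step is the one I expect to be the real obstacle: everything preceding it is formal manipulation essentially insensitive to the signature, whereas here one must confirm that the standard integrability-and-parallelism arguments genuinely survive the passage to an indefinite metric, with no hidden degeneracy beyond the one already excluded by hypothesis (\ref{d}).
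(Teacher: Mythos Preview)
The paper does not give its own proof of this theorem; it is stated in the introduction as a background result cited from \cite{Hitchin:1990a}, with the ensuing paragraph only commenting on the roles of the three hypotheses. There is therefore no in-paper argument against which to compare your proposal.

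For what it is worth, your outline is the standard one and is essentially correct. The pointwise linear algebra---mutual $g$-orthogonality of $V_m$, $IV_m$, $SV_m$, $TV_m$ deduced from $\omega_i(X_\xi,X_\eta)=0$, propagation of non-degeneracy from $V_m$ to all four summands via the isometry/anti-isometry properties of $I,S,T$, and the resulting splitting $\ker d\mu_m = V_m \oplus W_m^{\perp_g}$---is exactly how one sets this up, and your descent of the tensors and verification of $p^*\tilde\omega_i = i^*\omega_i$ are fine. You are also right to single out the final step (closedness of all three fundamental $2$-forms implies parallelism of $I',S',T'$) as the one non-formal ingredient; this does survive the passage to neutral signature, since the relevant computation expresses $\nabla^{g'}$ of each endomorphism algebraically in terms of the exterior derivatives of the $\tilde\omega_i$ and uses no positivity.
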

Conditions (\ref{s}) and (\ref{f}) ensure that the quotient is a smooth manifold, whereas the third condition guarantees the non-degeneracy of the symplectic forms induced by the $\omega_i$'s. In the hyperk\"ahler case, all we have to assume is condition (\ref{f}), which then implies (\ref{s}) and (\ref{d}). The definiteness of the hyperk\"ahler metric is crucial in the proof and these arguments do not work anymore in our situation. We still have however that (\ref{f}) and (\ref{d}) together imply (\ref{s}). 

In most applications, we can ensure that the conditions (\ref{s}) and (\ref{f}) are satisfied, e.g. by changing the level $c$ or by passing to a suitable subgroup or  quotient of $G$. Condition (\ref{d}) is more difficult to arrange and therefore the hypersymplectic structure on $\mu^{-1}(c)/G$ is expected to be degenerate even though the quotient manifold itself is smooth, i.e. the $\omega_i$'s are symplectic only on the complement of a \emph{degeneracy locus}. For more details, see \cite{Hitchin:1990a} and \cite{DancerSwann:2008}.

In this paper, we investigate a hypersymplectic  structure obtained from the quotient construction in an infinite-dimensional setting, namely we study the hypersymplectic geometry of the moduli space of solutions to Hitchin's gauge theoretic equations for harmonic maps from a Riemann surface $M$ into a compact Lie group $G$ \cite{Hitchin:1990}. Historically, these equations mark the starting point of the subject of hypersymplectic geometry \cite{Hitchin:1990a} and are given by 
\begin{equation}\tag{4}\label{H}
F^\nabla = [\Phi\wedge \Phi^*],\qquad \bar\partial^\nabla \Phi =0,
\end{equation}
where $(\nabla,\Phi)$ is a pair consisting of a $G$-connection $\nabla$ on a $G$-vector bundle $E$, and a Higgs field $\Phi \in \Gamma(M, \mathfrak g(E)^\mathbb C \otimes K)$, where $K$ is the canonical bundle and $\mathfrak g(E) \subset \mathrm{End}(E)$ is the associated bundle of Lie algebras with fibre $\mathrm{Lie}(G)$. They can be obtained from the ASD equations on $\mathbb R^{2,2}$ by imposing translation invariance with respect to the $s$ and $t$ directions, and they are the split signature analogue of Hitchin's self-duality equations on a Riemann surface \cite{Hitchin:1987}. The moduli space of solutions to the latter equations is known to carry a hyperk\"ahler structure, essentially because the equations may be interpreted as the vanishing condition of a hyperk\"ahler moment map in an infinite-dimensional setting. Since the harmonic map equations and the self-duality equations only differ by a sign in the first equation, it is natural to expect the harmonic map equations to have a moment map interpretation, too. In this way Hitchin in \cite{Hitchin:1990a} was led to the definition of hypersymplectic structures. 

Solutions to the harmonic map equations (\ref{H}) describe harmonic sections  of flat $G\times G$-bundles. Due to the split-signature origin of the equations, we cannot expect to have a smooth global hypersymplectic moduli space in this situation. Typically, the smoothness of gauge-theoretic moduli spaces is established by viewing the moduli space as the vanishing locus of a certain section of a vector bundle (with infinite-dimensional fibres). One then proves a vanishing theorem, which asserts that at each point of the moduli space the differential of the section is surjective, or at least of constant co-rank. It is at this point, where in addition to the ellipticity, the \emph{definiteness} of the involved operators enters the argument. In our situation the change of signature destroys the positivity of the involved operators and we are thus not able to prove vanishing theorems which would ensure that the dimension of the moduli space does not jump. However, if we are looking for solutions with zero Higgs field, the sign change does not play a role and an argument involving the implicit function theorem enables us to produce a well-behaved neighbourhood of the moduli space of flat connections inside the moduli space of solutions to Hitchin's harmonic map equations. On this neighbourhood we can study the hypersymplectic geometry of the moduli space.

Reinterpreting the equations (\ref{H}) as the equation for a geodesic segment on the space $\mathcal A/\mathcal G$ of $G$-connections modulo gauge transformations, whose endpoints lie on the moduli space $\mathcal N$ of flat connections (see \cite{Hitchin:1990}), we exhibit this neighbourhood as a neighbourhood of the diagonal inside $\mathcal N \times \mathcal N$. The local product structure of this neighbourhood is induced by the endomorphism $S$ of the hypersymplectic structure on the moduli space. It corresponds to assigning to a geodesic segment its endpoints. 
Moreover,  geodesic segments with conjugate endpoints can be related to elements of the degeneracy locus for the hypersymplectic structure. In classical terminology of  Riemannian geometry, we show that the degeneracy locus of the hypersymplectic structure is directly related to the \emph{cut locus} of the infinite-dimensional Riemannian manifold $\mathcal A/\mathcal G$. 

\section{The Equations and Their Moduli Space of Solutions}

Although the equations make sense for any compact structure group $G$, we will work in the following with $G=\mathrm U(n)$, i.e. consider the equations on a hermitian vector bundle $E$ of rank $n$ over a compact Riemann surface $M$. Since every compact Lie group may be embedded into $\mathrm U(n)$ for some $n$, this is not a restriction and with minor modifications our proofs  work for arbitrary vector bundles with compact structure group. In this section, we study the analysis of the equations and show that inside the moduli space of solutions, there exists a smooth open set represented by solutions with small Higgs fields. The proof is a deformation argument based on the implicit function theorem. But first we set up the framework and notation.

Let $M$ be a compact Riemann surface of genus $g$ and let $E \to M$ be a hermitian vector bundle. Let $\mathcal A$ be the space of unitary connections on $E$. The equations are given by
$$F^\nabla = [\Phi\wedge \Phi^*],\qquad \bar\partial^\nabla \Phi =0,$$
where $(\nabla,\Phi)$ is a pair consisting of a unitary connection $\nabla$ and a Higgs field $\Phi \in \Omega^{1,0}(M, \mathrm{End}(E))$. We will often drop the $M$ and write $\Omega^1(\mathfrak u(E)), \Gamma(\mathrm{End}(E)),\dots$  instead of $\Omega^1(M,\mathfrak u(E)),\Gamma(M,\mathrm{End}(E)),\dots$ in order to simplify the notation.  

It will turn out to be useful to think of the pair $(\nabla,\Phi)$ as an element of the cotangent bundle $T^*\mathcal A = \mathcal A \times \Omega^{1,0}(\mathrm{End}(E))$. This identification arises from the correspondence between unitary connections and holomorphic structures on $E$ by assigning to a connection the induced $\bar \partial$-operator and the map $$\Lambda: \Phi \mapsto -2i\int_M \mathrm{tr}(\Phi \wedge -),$$ which induces an isomorphism between the $L^2$-completions of $\Omega^{1,0} (\mathrm{End}(E))$ and  $(\Omega^{0,1} (\mathrm{End}(E))^*$.

The gauge group 
$$\mathcal G = \{ u\in  \Gamma(\mathrm{End}(E))\ | \ u^*u = \mathrm{id}_E \} = \Gamma(\mathrm{U}(E))$$ 
of unitary bundle automorphisms of $E$ acts on $T^*\mathcal A$ (on the right) via 
$$u.(\nabla,\Phi) = (\nabla + u^{-1}\mathrm d^\nabla u, u^{-1}\Phi u),$$
preserving the equations and we are interested in the moduli space of solutions to the equations modulo gauge transformations. We shall explain shortly why we let gauge transformations act on the right.

The Lie algebra of the gauge group is given by skew-adjoint bundle endomorphisms of $E$, i.e. $\mathrm{Lie}(\mathcal G) = \Gamma(\mathfrak u(E)) = \{ \xi\in  \Gamma(\mathrm{End}(E))\ | \ \xi^* = -\xi \}$, and the fundamental vector fields of the action on $T^*\mathcal A$ are given by 
$$X^\xi_{(\nabla,\Phi)} = (\mathrm d^\nabla \xi, [\Phi, \xi]) =: \mathcal D_1(\xi).$$
That is, the linearised action is implemented by the first order differential operator 
$$\mathcal D_1: \mathrm{Lie}(\mathcal G) = \Gamma(\mathfrak u(E)) \to \Omega^1(\mathfrak u(E)) \oplus \Omega^{1,0}(\mathrm{End}(E)).$$
The action of $\mathcal G$ is not free on $T^*\mathcal A$: Any gauge transformation $u$ of the form $e^{i\theta}\mathrm{id}_E$, where $\theta \in \mathbb R$, i.e. $u\in Z(\mathrm U(n))$, the centre of $\mathrm U(n)$  (note that this is \emph{not} the centre of the gauge group $\mathcal G$), lies in the stabiliser of any pair $(\nabla,\Phi)$. So if we allow arbitrary solutions and arbitrary gauge transformations, we cannot expect to produce a well-behaved moduli space. Instead, we restrict attention to solutions with minimal stabiliser and divide the gauge group by the centre of $\mathrm U(n)$. 

We thus say that a pair $(\nabla,\Phi)$ is \emph{irreducible}, if its stabiliser is equal to $Z(\mathrm U(n))$ and write $T^*\mathcal A^*$ for the space of irreducible pairs. We define the \emph{reduced gauge group}, which we denote by $\mathcal G^*$ to be 
$$\mathcal G^* = \mathcal G/Z(\mathrm U(n)).$$
Then $\mathcal G^*$ acts freely on $T^*\mathcal A^*$ and we want to study the moduli space 
$$\mathcal M = \{(\nabla, \Phi)\in T^*\mathcal A^* \ |\  F^\nabla - [\Phi\wedge \Phi^*] = 0= \bar\partial^\nabla \Phi\ \}/\mathcal G^*.$$

In order to apply analytical tools, we work with the Banach space completions of $\mathcal A$, $T^*\mathcal A$ and $\mathcal G^*$ with respect to the $L^2_k$-Sobolev norm  which we denote by $\mathcal A_k$, $T^*\mathcal A_k$ and $\mathcal G_k^*$, where on $T^*\mathcal A = \mathcal A \times \Omega^{1,0}(\mathrm{End}(E))$ we take the direct sum of the respective Sobolev norms on each factor. We write $T^*\mathcal A_k^*$ for the dense open subset of irreducible pairs in $T^*\mathcal A_k$. The Sobolev embedding and multiplication theorems imply  that for $k\geq1$ the gauge group $\mathcal G_{k+1}$ is a smooth Hilbert Lie group and acts smoothly on $\mathcal T^*\mathcal A_k$. Note that the action of the gauge group involves derivatives of the gauge transformations, hence the different Sobolev indices.

The space $\mathcal B_k^*=T^*\mathcal A_k^*/\mathcal G_{k+1}^*$ of gauge equivalence classes of irreducible pairs $(\nabla,\Phi)$ is a smooth infinite-dimensional manifold.  In fact $T^*\mathcal A_k^* \to \mathcal B_k^*$ is principal $\mathcal G_{k+1}^*$-bundle due to the existence of a local \emph{Coulomb gauge}, which provides local slices for the action of the gauge group: 

\begin{proposition}\label{hslice}
Let $(\nabla, \Phi) \in T^*\mathcal A_k^*$ be irreducible. Then there exists a constant $\epsilon(\nabla, \Phi) > 0$, such that if  $(\nabla + A, \Phi + \Psi) \in T^*\mathcal A_{L^2_1}$ with $||A||_{L^4}^2 + ||\Psi||_{L^4}^2 < \epsilon$, there exists a unique gauge transformation $u \in \mathcal G_{k+1}^*$ such that 
$$\mathcal D_1^*(u.(A,\Psi)) = 0.$$
\end{proposition}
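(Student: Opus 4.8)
The plan is to obtain the gauge transformation as the unique solution near the identity of an elliptic equation whose linear part is the Laplace-type operator $\mathcal D_1^*\mathcal D_1$ — invertible precisely because $(\nabla,\Phi)$ is irreducible — and to control the remaining first-order and nonlinear terms by the $L^4$-norms of $A$ and $\Psi$. I would parametrise gauge transformations near the identity by the exponential chart $\xi\mapsto u=e^\xi$, with $\xi$ ranging over a neighbourhood of $0$ in $L^2_{k+1}(\mathfrak u(E))$ taken modulo the central line $i\mathbb R\cdot\mathrm{id}_E$; this quotient is $\mathrm{Lie}(\mathcal G_{k+1}^*)$, and passing to it is exactly where the reduced gauge group enters, since the central constants act trivially and must be divided out for any uniqueness statement to hold. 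Writing $u.(A,\Psi):=u.(\nabla+A,\Phi+\Psi)-(\nabla,\Phi)$ for the perturbation after gauging and expanding in $\xi$, the Coulomb condition $\mathcal D_1^*\big(u.(A,\Psi)\big)=0$ takes the schematic form
$$\mathcal D_1^*\mathcal D_1\,\xi \ = \ Q(\xi,A,\Psi),$$
where $\mathcal D_1^*$ is the formal $L^2$-adjoint at $(\nabla,\Phi)$ of the operator $\mathcal D_1$ introduced above, and $Q$ collects every term that is either nonlinear in $\xi$ or carries a factor of $A$ or $\Psi$; in particular $Q(0,0,0)=0$, which merely records that Coulomb gauge holds trivially at the reference point.

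The operator $\mathcal D_1^*\mathcal D_1$ is second order, formally self-adjoint and non-negative, with the same principal symbol as the covariant Laplacian $(\mathrm d^\nabla)^*\mathrm d^\nabla$ on $\mathfrak u(E)$ — the $[\Phi,-]$-contribution to $\mathcal D_1$ is of order zero — hence elliptic. Elliptic theory makes it Fredholm between the relevant Sobolev completions, self-adjointness identifies its cokernel with its kernel, and from $\langle\mathcal D_1^*\mathcal D_1\,\xi,\xi\rangle_{L^2}=\|\mathcal D_1\xi\|_{L^2}^2$ its kernel equals $\ker\mathcal D_1$, the Lie algebra of the stabiliser of $(\nabla,\Phi)$. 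Irreducibility says this stabiliser is $Z(\mathrm U(n))$, whose Lie algebra is the central line already quotiented out; therefore $\mathcal D_1^*\mathcal D_1$ is an isomorphism on $\mathrm{Lie}(\mathcal G_{k+1}^*)$, with a bounded inverse $G=(\mathcal D_1^*\mathcal D_1)^{-1}$ gaining two Sobolev derivatives. The problem then reduces to the fixed-point equation $\xi=G\,Q(\xi,A,\Psi)=:\mathcal T_{(A,\Psi)}(\xi)$.

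It remains to show $\mathcal T_{(A,\Psi)}$ is a contraction on a small ball once $\|A\|_{L^4}^2+\|\Psi\|_{L^4}^2<\epsilon(\nabla,\Phi)$; the unique fixed point then gives the unique $u=e^\xi\in\mathcal G_{k+1}^*$ near the identity solving $\mathcal D_1^*(u.(A,\Psi))=0$. For perturbations small in $L^2_1$ this is already a routine consequence of the inverse function theorem, since $\mathcal T$ is smooth with vanishing differential at the origin; the content of the sharper $L^4$-hypothesis is that, using the Sobolev embedding $L^2_1\hookrightarrow L^4$ and elliptic $L^p$-estimates for $G$ together with multiplication estimates of the schematic type $\|A\,\xi\|_{L^2}\lesssim\|A\|_{L^4}\|\xi\|_{L^2_1}$ on the surface $M$, the $\xi$-independent part of $Q$ (essentially $\mathcal D_1^*(A,\Psi)$) is controlled after applying $G$ by $\|A\|_{L^4}+\|\Psi\|_{L^4}$, while the terms quadratic or higher in $\xi$ carry a factor of the ball radius — so choosing $\epsilon$ and the radius small (in terms of $\|G\|$ and the geometry of $M$) makes $\mathcal T_{(A,\Psi)}$ a contraction. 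A standard elliptic bootstrap applied to $\mathcal D_1^*\mathcal D_1\,\xi=Q(\xi,A,\Psi)$ then raises the regularity of $\xi$ to match that of the data. I expect the main obstacle to be not the fixed-point argument itself but the bookkeeping of Sobolev exponents needed to make $Q$ and $G$ act between the correct completions in real dimension two, and in particular the verification that the relevant estimates on $Q$ are genuinely governed by the $L^4$-norms of $A$ and $\Psi$ rather than by stronger norms — which is exactly what dictates the form of the hypothesis.
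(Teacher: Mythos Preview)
Your proposal is correct and follows precisely the approach the paper indicates: the paper does not give a detailed proof but simply states that it is ``a standard application of the implicit function theorem using the ellipticity of $\mathcal D_1^*\mathcal D_1$'' along the lines of Donaldson--Kronheimer, Proposition 2.3.4. Your outline --- exponential chart, quotienting by the central line so that irreducibility makes $\mathcal D_1^*\mathcal D_1$ invertible, and a contraction/implicit function theorem argument with the $L^4$-smallness entering through the Sobolev multiplication estimates --- is exactly this standard argument written out in more detail than the paper provides.
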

This observation is of course not new, it is also used by Hitchin in \cite{Hitchin:1987}.
The proof of this proposition is a standard application of the implicit function theorem using the ellipticity of $\mathcal D_1^*\mathcal D_1$ and works along the lines of proposition 2.3.4 in \cite{DonaldsonKronheimer:1990}. The interpretation of $T^*\mathcal A_k^*$ as a principal $\mathcal G_{k+1}^*$-bundle is the reason why we prefer to have $\mathcal G_{k+1}^*$ acting on the right. 

Recall that the gauge group acts on $\mathfrak u(E)$-valued two-forms by conjugation, i.e. by the adjoint action $\mathrm{Ad}$. Considering the principal $\mathcal G_{k+1}^*$-bundle $T^*\mathcal A_k^* \to \mathcal B_k^*$, we can form the associated vector bundle
$$\mathcal V = T^*\mathcal A_k^* \times_{\mathrm{Ad \mathcal G_{k+1}^*}} (\Omega^2(\mathfrak u(E)) \oplus \Omega^2(\mathrm{End}(E))).$$
Now we interpret the moduli space $\mathcal M$ as the zero locus of a section $G$ of $\mathcal V$, which is defined as follows:
\begin{eqnarray*}
G: T^*\mathcal A_k^* &\to& \Omega^2(\mathfrak u(E)) \oplus \Omega^2(\mathfrak u(E) \otimes \mathbb C)\\
G(\nabla, \Phi) &=& (F^\nabla - [\Phi\wedge \Phi^*], \bar\partial^\nabla \Phi).
\end{eqnarray*}
Note that $G(u.\nabla, u.\Phi) = u^{-1}(G(\nabla, \Phi))u$, i.e. $G$ is equivariant with respect to the actions of $\mathcal G^*$ on $T^*\mathcal A_k^*$ and $\Omega^2(\mathfrak u(E)) \oplus \Omega^2(\mathfrak u(E) \otimes \mathbb C)$, thus it descends to define a section of $\mathcal V$ over $\mathcal B_k^*$. We compute the derivative of $G$ at a point $(\nabla, \Phi)$ to be 
$$\mathrm dG_{(\nabla, \Phi)}(A,\psi) = (\mathrm d^\nabla A - [\Phi \wedge \psi^*] - [\psi\wedge \Phi^*], \bar\partial^\nabla \psi + [A^{0,1}\wedge \Phi]).$$
If $(\nabla,\Phi)$ solves the harmonic map equations, then the tangent space to the moduli space at this point is identified with the first cohomology space of the following deformation complex: 
$$L^2_k(\mathfrak u(E)) \stackrel{\mathcal D_1}{\rightarrow} \Omega^1_{k-1}(\mathfrak u(E)) \oplus \Omega^{1,0}_{k-1}(\mathrm{End}(E)) \stackrel{\mathrm dG_{(\nabla,\Phi)}}{\rightarrow} \Omega^2_{k-2}(\mathfrak u(E))\oplus\Omega^2_{k-2}(\mathrm{End}(E)),$$
where the first map is given by the linearisation of the action, i.e. by the operator $\mathcal D_1$ introduced earlier, and the second map is the derivative of the map $G$. Thus, the tangent space to the moduli space is identified with the orthogonal complement of the image of $\mathcal D_1$ inside the kernel of $\mathrm dG_{(\nabla,\Phi)}$, i.e. with $\ker(\mathcal D_1^* \oplus \mathrm dG_{(\nabla,\Phi)})$. 

Note that the deformation complex is elliptic. Therefore, its cohomology groups are finite-dimensional and its Euler characteristic is given by the index of the operator $\mathcal D_1^* \oplus \mathrm dG_{(\nabla,\Phi)}$. In fact, by irreducibility we always have $\dim H^0 = 1$ for any irreducible $(\nabla,\Phi)$. The problem is, that we are not able to show that the cohomology group $H^2$ of this complex is of constant dimension independent of $(\nabla,\Phi)$. Therefore, the whole moduli space is not expected to be smooth. 

However, for solutions with zero Higgs field, we are able to prove a vanishing theorem and construct therefore a smooth open neighbourhood of the moduli space of flat unitary connections inside the harmonic map moduli space. The precise statement is given in the following proposition.

\begin{proposition}
Let $\nabla \in \mathcal A_k^*$ be an irreducible flat connection, then the image of the differential of $G$ at $(\nabla, 0) \in \mathcal B_k^*$ is of constant codimension independent of the solution $(\nabla,0)$. Moreover, its kernel has dimension $(\dim \mathrm U(n))4(g-1)+4 = 4(n^2(g-1)+1)$. 
\end{proposition}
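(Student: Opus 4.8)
The plan is to analyze the deformation complex
$$L^2_k(\mathfrak u(E)) \stackrel{\mathcal D_1}{\rightarrow} \Omega^1_{k-1}(\mathfrak u(E)) \oplus \Omega^{1,0}_{k-1}(\mathrm{End}(E)) \stackrel{\mathrm dG_{(\nabla,0)}}{\rightarrow} \Omega^2_{k-2}(\mathfrak u(E))\oplus\Omega^2_{k-2}(\mathrm{End}(E))$$
at a point $(\nabla, 0)$ with $\nabla$ flat, and to show that it \emph{decouples} into two subcomplexes: one governing deformations of the connection (the unitary part) and one governing the Higgs field (the $\mathrm{End}(E)$-valued part). Indeed, setting $\Phi = 0$ in the formula for $\mathrm dG_{(\nabla,\Phi)}$, the derivative becomes $\mathrm dG_{(\nabla,0)}(A,\psi) = (\mathrm d^\nabla A,\ \bar\partial^\nabla\psi)$, and $\mathcal D_1(\xi) = (\mathrm d^\nabla\xi, 0)$. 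So the complex splits as the direct sum of
$$L^2_k(\mathfrak u(E)) \xrightarrow{\mathrm d^\nabla} \Omega^1_{k-1}(\mathfrak u(E)) \xrightarrow{\mathrm d^\nabla} \Omega^2_{k-2}(\mathfrak u(E))$$
and the two-term complex $\Omega^{1,0}_{k-1}(\mathrm{End}(E)) \xrightarrow{\bar\partial^\nabla} \Omega^2_{k-2}(\mathrm{End}(E))$ (with zero incoming map). The first is just the de Rham complex of $M$ with coefficients in the flat bundle $\mathfrak u(E)$, twisted by $\nabla$; the second is essentially the Dolbeault complex computing $H^1(M, \mathrm{End}(E)\otimes K)$.

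Next I would identify the two pieces of the cokernel/kernel count. For the first subcomplex, flatness of $\nabla$ means $\mathrm d^\nabla\circ\mathrm d^\nabla = 0$ and the cohomology is the twisted de Rham cohomology $H^*(M; \mathfrak u(E)_\nabla)$; since $\mathrm U(n)$ is compact the flat bundle carries a parallel metric, so Poincar\'e duality applies and, by irreducibility, $H^0 = H^2 = Z(\mathfrak u(n)) = \mathbb R$ (one-dimensional), whence $\dim H^1 = 2\dim\mathrm U(n)(g-1) + 2\cdot 1$ by the Euler characteristic formula $\chi = (\dim\mathrm U(n))(2-2g)$. This is independent of $\nabla$. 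For the second subcomplex, $\bar\partial^\nabla$ equips $\mathrm{End}(E)$ with a holomorphic structure; the relevant cohomology is $H^0(M,\mathrm{End}(E)\otimes K)$ in degree one and $H^1(M,\mathrm{End}(E)\otimes K)$ in degree two (the latter is the $H^2$ contribution to the obstruction space). By Serre duality $H^1(\mathrm{End}(E)\otimes K) \cong H^0(\mathrm{End}(E))^*$, and $H^0(M,\mathrm{End}(E)\otimes K)\cong H^1(M,\mathrm{End}(E))^*$; for the relevant flat (hence polystable of degree $0$) bundle, irreducibility gives $H^0(\mathrm{End}(E)) = \mathbb C$, so the contributions are again of \emph{constant} dimension, and Riemann--Roch gives $\dim H^0(\mathrm{End}(E)\otimes K) = n^2(g-1) + 1$ (matching $h^1(\mathrm{End}(E)) = n^2(g-1)+1$).

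Assembling: the codimension of $\mathrm{im}(\mathrm dG_{(\nabla,0)})$ is $\dim H^2$ of the full complex $= 1 + (n^2(g-1)+1)$, which is manifestly independent of the flat connection $(\nabla,0)$, proving the first assertion via the implicit function theorem (constant-rank) argument already alluded to in the surrounding text. For the kernel of $\mathrm dG_{(\nabla,0)}$, we have $\dim\ker\mathrm dG = \dim H^1 + \dim\,\mathrm{im}(\mathcal D_1) = \dim H^1 + (\dim H^1_{\mathrm{deRham}}\text{-complement})$; more directly, $\ker\mathrm dG_{(\nabla,0)} = \ker\mathrm d^\nabla|_{\Omega^1(\mathfrak u(E))} \oplus \ker\bar\partial^\nabla|_{\Omega^{1,0}(\mathrm{End}(E))}$, and $\dim\ker\mathrm d^\nabla|_{\Omega^1} = \dim H^1_{\mathrm{deRham}} + \dim\,\mathrm{im}(\mathrm d^\nabla|_{\Omega^0}) = [2\dim\mathrm U(n)(g-1)+2] + [\dim\mathrm U(n) - 1]$, while $\dim\ker\bar\partial^\nabla|_{\Omega^{1,0}(\mathrm{End}(E))} = \dim_{\mathbb R}H^0(\mathrm{End}(E)\otimes K) = 2(n^2(g-1)+1)$. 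I would then check the arithmetic reduces to $(\dim\mathrm U(n))4(g-1)+4 = 4(n^2(g-1)+1)$; note the stated formula suggests the intended count is of the kernel of $\mathcal D_1^*\oplus\mathrm dG$, i.e. of the genuine tangent space $H^1$ of the full split complex, which is $[2n^2(g-1)+2] + [2n^2(g-1)+2] = 4n^2(g-1)+4$ — this is the cleanest route and I would phrase the final step that way. The main obstacle is bookkeeping: correctly matching the real versus complex dimensions across the two subcomplexes and pinning down exactly which cohomology group the proposition's "kernel" refers to, rather than any deep analytic input — the vanishing/constancy is entirely a consequence of flatness (hence polystability of degree zero) plus irreducibility.
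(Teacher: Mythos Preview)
Your decoupling of the deformation complex into the twisted de\,Rham piece and the Dolbeault piece is exactly the right move, and it is also what the paper does. From there the two arguments diverge slightly: the paper computes the cokernel of $\mathcal D_2$ by writing down $\ker(\mathcal D_2^*\oplus\mathcal D_1)$ explicitly and using an integration-by-parts (Weitzenb\"ock) identity together with flatness to show that $\partial^\nabla\eta=0$ forces $\bar\partial^\nabla\eta=0$, hence $\eta$ parallel; you instead invoke Serre duality $H^1(\mathrm{End}(E)\otimes K)\cong H^0(\mathrm{End}(E))^*$ and Poincar\'e duality for the flat de\,Rham piece. Both routes rest on the same input (irreducibility, flatness) and both give constancy of the codimension; yours is more cohomological, the paper's more analytic. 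For the kernel dimension the paper simply cites the Atiyah--Singer index computation from Hitchin's self-duality paper, while you compute each summand directly via the Euler characteristic and Riemann--Roch; your final count $[2n^2(g-1)+2]+[2n^2(g-1)+2]=4n^2(g-1)+4$ for $H^1$ is correct and matches the paper.

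There is, however, a bookkeeping slip in your ``Assembling'' line: you write the codimension as $1+(n^2(g-1)+1)$, but the second summand should be the \emph{real} dimension of $H^1(\mathrm{End}(E)\otimes K)\cong H^0(\mathrm{End}(E))^*\cong\mathbb C$, namely $2$, not $h^0(\mathrm{End}(E)\otimes K)=n^2(g-1)+1$. You have swapped the roles of $H^0$ and $H^1$ of $\mathrm{End}(E)\otimes K$ in that sentence. The correct codimension is $1+2=3$, which agrees with the paper's computation. Fortunately this does not affect the actual claims of the proposition: constancy still follows (your own Serre-duality paragraph already identifies the obstruction space correctly), and the kernel dimension is unaffected.
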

\begin{proof}
We work on a slice neighbourhood of $(\nabla,0)$ provided by proposition \ref{hslice}, i.e. we restrict $G$ to an appropriate $\epsilon$-ball in $\ker\mathcal D_1^*$. Since we have zero Higgs field, the operator $\mathcal D_1$ is given by 
$$\mathcal D_1(\xi) = (\mathrm d^\nabla \xi, 0) \qquad \xi \in \Gamma(\mathfrak u(E)).$$
The derivative $\mathrm dG$ simplifies to 
$$\mathrm dG_{(\nabla, 0)}(A,\psi) = (\mathrm d^\nabla A, \bar\partial^\nabla \psi).$$
Let us denote this operator by $\mathcal D_2(A,\psi)$.
Furthermore, we have 
$$\mathcal D_2^*(\alpha, \beta) = ((\mathrm d^\nabla)^* \alpha, (\bar\partial^\nabla)^* \beta),$$ 
where $(\alpha,\beta) \in \Omega_{k-2}^2(\mathfrak u(E)) \oplus \Omega_{k-2}^2(\mathrm{End}(E))$. In the following we drop the $k$ to make the notation more readable but of course still work with the Sobolev completions of the relevant spaces of sections. 

In order to proceed, we make use of some elliptic theory.
Since the domain of $G$ is contained in $\ker \mathcal D_1^*$, we have $\mathcal D_2 = \mathcal D_2 + \mathcal D_1^*$. So we have to check that the dimension of the kernel of the adjoint operator $\mathcal D_2^* + \mathcal D_1$ is independent of $\nabla$.

We use the Hodge star to identify $\Omega^2(\mathfrak u(E)) \cong \Omega^0(\mathfrak u(E))$ and analogously for the complex forms. Under this identification, the operator $(\mathrm d^\nabla)^*$ corresponds to $\mathrm d^\nabla$ and the $(\bar\partial^\nabla)^*$ corresponds to $\partial^\nabla$. Moreover we identify $\Omega^1$ with $\Omega^{0,1}$ in the usual way and thus think of the operator 
$$\mathcal D_2^* \oplus \mathcal D_1: \Omega^2(\mathfrak u(E)) \oplus \Omega^2(\mathrm{End}(E)) \oplus \Omega^0(\mathfrak u(E)) \to \Omega^1(\mathfrak u(E)) \oplus \Omega^{1,0}(\mathrm{End}(E)),$$ 
after putting $\Omega^0(\mathfrak u(E)) \oplus \Omega^0(\mathfrak u(E)) \cong \Omega^0(\mathrm{End}(E))$, as the operator 
$$\mathcal D_2^* \oplus \mathcal D_1: \Omega^0(\mathrm{End}(E)) \oplus \Omega^0(\mathrm{End}(E)) \to \Omega^1(\mathfrak u(E)) \oplus \Omega^{1,0}(\mathrm{End}(E)),$$
given by 
$$\left(\begin{array}{cc} 
\mathrm d^\nabla & 0\\0& \partial^\nabla
\end{array}\right).$$
That is, an element $(\xi, \eta) \in\Omega^0(\mathfrak u(E)) \oplus \Omega^0(\mathrm{End}(E))$ lies in the kernel if and only if 
$$\mathrm d^\nabla \xi = 0 \qquad \text{and \ } \qquad \partial^\nabla \eta = 0.$$
Now by irreducibility of $\nabla$, we immediately conclude that $\xi = 0$ in $\mathrm{Lie}(\mathcal G_{k+1}^*)$, i.e. $\xi \in \mathfrak u(1) \mathrm{id}_E$. 
Moreover, $\eta$ is also parallel as can be seen by the following integration by parts argument:
\begin{eqnarray*}
0&=& || \partial^\nabla \eta||_{L^2}^2 \\
&=& -2i\int_M\mathrm{tr}(\partial^\nabla \eta \wedge (\partial^\nabla \eta)^*) \\
 &=&-2i\int_M\mathrm{tr}(\partial^\nabla \eta \wedge -\bar\partial^\nabla (\eta^*))\\
 &=&-2i\int_M\bar\partial \mathrm{tr}((\partial^\nabla\eta)\eta^*) - \mathrm{tr}((\bar\partial^\nabla\partial^\nabla\eta)\eta^*)\\
 &=& -2i\int_M \mathrm{tr}(-(\partial^\nabla\bar\partial^\nabla\eta)\eta^*) \qquad \text{(as $\nabla$ is flat)}\\
 &=& -2i\int_M \mathrm{tr}(\bar\partial^\nabla \eta \wedge (\bar\partial^\nabla \eta)^*)\\
 &=& ||\bar \partial^\nabla \eta||_{L^2}^2.
 \end{eqnarray*}
Thus, since $\nabla$ is assumed to be irreducible, it follows that $\xi \in i\mathbb R \mathrm{id}_E$ and $\eta\in \mathbb C \mathrm{id}_E$. In other words, $H^2$ is of real dimension $3$. 

The statement about the dimension of the kernel of $\mathrm dG$ is obtained from the Atiyah-Singer-Index theorem. The details can be found in \cite{Hitchin:1987}, section 5. This makes sense, since in the case $\Phi = 0$ both complexes here and in \cite{Hitchin:1987} reduce to the same elliptic complex. Note however, that from the above discussion we have $H^0 =1$ and $H^2 = 3$ in the deformation complex. Thus the proof of the proposition is complete.
\end{proof}

\begin{corollary}
Let $\nabla$ be an irreducible flat connection, then on a sufficiently small neighbourhood of $\nabla$ in $\mathcal A$  there exists a $4(n^2(g-1)+1)$ dimensional family of solutions to the harmonic map equations.
\end{corollary}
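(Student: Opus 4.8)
\section*{Proof proposal for the Corollary}

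The plan is to deduce the corollary from the preceding proposition by the implicit function theorem on a Coulomb slice, the only additional input being the observation that the finitely many cokernel directions of $\mathrm dG_{(\nabla,0)}$ are \emph{never met} by $G$ itself, so that the deformation problem is genuinely unobstructed and no Kuranishi correction is needed.

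First I would fix the slice. Irreducibility is an open condition, so for $\epsilon$ small enough every pair $(\nabla+A,\Phi)$ with $\|A\|_{L^4}^2+\|\Phi\|_{L^4}^2<\epsilon$ is irreducible, and Proposition \ref{hslice} provides a local slice $\mathcal S_\epsilon=\{(\nabla+A,\Phi)\in T^*\mathcal A_k^*:\mathcal D_1^*(A,\Phi)=0,\ \|A\|_{L^4}^2+\|\Phi\|_{L^4}^2<\epsilon\}$ meeting each nearby $\mathcal G_{k+1}^*$-orbit exactly once, so that $\mathcal S_\epsilon$ models a neighbourhood of $[\nabla]$ in $\mathcal B_k^*$. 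Restricting $G$ to $\mathcal S_\epsilon$ gives a smooth map $\tilde G\colon\mathcal S_\epsilon\to\Omega^2_{k-2}(\mathfrak u(E))\oplus\Omega^2_{k-2}(\mathrm{End}(E))$ whose zero set is a local model for the harmonic map moduli space near $[\nabla]$. By the preceding proposition, $\mathrm d\tilde G_{(\nabla,0)}=\mathcal D_2|_{\ker\mathcal D_1^*}$ is Fredholm with kernel $H^1$ of dimension $4(n^2(g-1)+1)$ and three-real-dimensional cokernel $H^2$, which by the Hodge-theoretic argument in that proof is spanned by the Hodge-duals of the constant multiples of the identity endomorphism in each of the two slots; equivalently $\mathrm{im}(\mathrm d\tilde G_{(\nabla,0)})$ is the codimension-$3$ subspace $W$ that is $L^2$-orthogonal to these three ``trace'' directions.

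The key step --- which I expect to be the heart of the matter --- is to verify that $G$ itself, not merely its linearisation, takes values in $W$; this is what kills the obstruction. Pairing $G(\nabla+A,\Phi)$ against the $\mathfrak u(E)$-slot cokernel direction amounts to $\int_M\mathrm{tr}\bigl(F^{\nabla+A}-[\Phi\wedge\Phi^*]\bigr)=0$: here $\mathrm{tr}[\Phi\wedge\Phi^*]=0$ and $\mathrm{tr}(A\wedge A)=\tfrac12\mathrm{tr}[A\wedge A]=0$ by cyclicity of the trace (the $1$-form factors anticommute under $\wedge$), and $\mathrm{tr}(F^{\nabla+A})=\mathrm d\,\mathrm{tr}(A)$ because $\nabla$ is flat, so the integral vanishes by Stokes. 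Pairing against the $\mathrm{End}(E)$-slot cokernel direction amounts to $\int_M\mathrm{tr}(\bar\partial^{\nabla+A}\Phi)=\int_M\mathrm d(\mathrm{tr}\,\Phi)=0$. Hence $G$ maps $\mathcal S_\epsilon$ into the fixed closed subspace $W$, and $\tilde G\colon\mathcal S_\epsilon\to W$ is a submersion at $(\nabla,0)$.

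The implicit function theorem for Banach manifolds then shows that $\tilde G^{-1}(0)\subset\mathcal S_\epsilon$ is, near $(\nabla,0)$, a smooth submanifold of dimension $\dim\ker\mathrm d\tilde G_{(\nabla,0)}=4(n^2(g-1)+1)$, and a bootstrap through the gauge-fixed elliptic system $G=0$, $\mathcal D_1^*(A,\Phi)=0$ shows the solutions are smooth and the family is independent of $k$. This exhibits a $4(n^2(g-1)+1)$-dimensional family of (smooth) solutions to the harmonic map equations whose connection parts lie in a prescribed neighbourhood of $\nabla$ in $\mathcal A$, which is the assertion. Apart from the orthogonality computation above, the only point requiring care is the bookkeeping that transfers the Fredholm/Hodge package of the elliptic deformation complex onto the Sobolev completions on the slice, so that the hypotheses of the implicit function theorem are verified in substance rather than merely formally.
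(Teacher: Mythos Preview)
Your argument is correct, and in fact the paper states the corollary without proof, so there is nothing to compare against directly. The essential content you supply --- and which the paper tacitly relies on --- is the obstruction-vanishing computation showing that $G$ actually takes values in the closed subspace $W$ complementary to the three ``trace'' cokernel directions, so that the implicit function theorem applies to $\tilde G\colon\mathcal S_\epsilon\to W$ as a genuine submersion rather than merely a constant-corank map. Your verification via $\mathrm{tr}[\Phi\wedge\Phi^*]=0$, $\mathrm{tr}(A\wedge A)=0$, and Stokes' theorem on $\mathrm{tr}(F^{\nabla+A})$ and $\mathrm{tr}(\bar\partial^{\nabla+A}\Phi)$ is exactly the right computation. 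This is the natural and expected route from the preceding proposition to the corollary; you have simply made explicit what the paper leaves to the reader.
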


\section{The Hypersymplectic Geometry of the Moduli Space}
\subsection{The Moment Map Interpretation of the Equations}
We now apply the hypersymplectic quotient construction in an infinite-dimensional setting in order to study the geometry of the moduli space of solutions to Hitchin's harmonic map equations.

On $T^*\mathcal A$ we have a natural complex structure $I$ induced by the Hodge-star operator acting on one-forms on $M$. Together with the $L^2$ inner product we get an indefinite K\"ahler structure on $T^*\mathcal A$. Under the identification $\Omega^{1,0}(\mathrm{End}(E))\cong (\Omega^{0,1}(\mathrm{End}(E)))^*$ discussed earlier, the complex structure induced by the Hodge-star operator is just multiplication by $i$, that is, 
$$I(A,\Phi) = (iA,i\Phi).$$
The indefinite metric reads
$$g((A,\Phi), (B, \Psi)) = \mathrm{Re}\left(2i\int_M \mathrm{tr}(A^*\wedge B - \Phi \wedge \Psi^*)\right).$$
Like on any complex cotangent bundle, we also have the canonical holomorphic symplectic form $\omega_I^\mathbb C$ given by 
$$\omega_I^\mathbb C((A,\Phi), (B,\Psi)) = \Lambda(\Psi)(A) - \Lambda(\Phi)(B) = 2i\int_M \mathrm{tr}( \Phi\wedge B -\Psi \wedge A).$$
This clearly has type $(2,0)$ with respect to the complex structure $I$. We now define endomorphisms $S$ and $T$ of $T(T^*\mathcal A)$ by taking the real and imaginary parts of $\omega_I^\mathbb C$. That is, we  write 
$$\omega_I^\mathbb C = g(S-,-) + ig(T-,-).$$
Noting  that $\mathrm{Re}(\mathrm{tr}(\Phi\wedge A)) = \mathrm{Re}(\mathrm{tr}(A^*\wedge \Phi^*))$, a direct calculation shows that $S$ and $T$ are given by
$$S(A,\Phi) = (\Phi^*,A^*) \qquad T(A,\Phi) = (i\Phi^*,iA^*) = IS(A,\Phi).$$
The action of the gauge group preserves the above flat hypersymplectic structure on $T^*\mathcal A$, and we show now that it admits a hypersymplectic moment map. We only calculate the moment map $\mu_I$ explicitly and then give the other two moment maps without proof in order to avoid repetition as the calculations are very similar.
Recall that the fundamental vector fields of the action are given by 
$$X^\xi_{(\nabla,\Phi)} = \mathcal D_1(\xi) = (\mathrm d^\nabla \xi, [\Phi,\xi]).$$
Now using bi-invariance of the trace inner product and Stokes' theorem, we compute analogously to the Atiyah-Bott calculation (see \cite{AtiyahBott:1982}, and also \cite{Hitchin:1987})
\begin{eqnarray*}
\omega_I(X_\xi, (A,\Psi)) &=& \int_M \mathrm{tr}(\mathrm d^\nabla \xi\wedge A) -\int_M\mathrm{tr}([\Phi-\Phi^*,\xi]\wedge(\Psi-\Psi^*)\\
&=& \int_M\mathrm{tr}((\mathrm d^\nabla A - [\Psi\wedge \Phi^*] - [\Phi \wedge \Psi^*])\xi)\\
&=& \frac{\mathrm d}{\mathrm dt}|_{t=0} \int_M\mathrm{tr}((R^{\nabla + t A} - [\Phi +t\Psi\wedge \Phi^*+t\Psi^*])\xi).
\end{eqnarray*}
Thus, under the identification $\mathrm{Lie}(\mathcal G)^* \cong \Omega^2(\mathfrak u(E))$ via the pairing 
$(\xi, \alpha)\in \mathrm{Lie}(\mathcal G) \times \Omega^2(\mathfrak u(E)) \mapsto \int_M\mathrm{tr}(\alpha\xi)$, we see that the moment map is given by
$$\mu_I(\nabla,\Phi) = \mathrm F^\nabla - [\Phi\wedge\Phi^*].$$
A similar calculation yields that 
$$(\mu_S +i\mu_T)(\nabla,\Phi) = \bar\partial^\nabla \Phi,$$ 
and so the vanishing of the moment map is indeed given by the harmonic map equations, 
\begin{eqnarray*}
\mu_I^\mathbb C(\nabla,\Phi) &=& \bar\partial^\nabla\Phi = 0,\\
\mu_I(\nabla,\Phi) &=& F^\nabla - [\Phi\wedge\Phi^*] =0. 
\end{eqnarray*}
Thus, formally, the moduli space $\mathcal M$ can be interpreted as a hypersymplectic quotient in an infinite-dimensional setting, and so we move now on to study the hypersymplectic geometry of the smooth open set constructed in the previous section.

\subsection{Degeneracies}
Recall from the introduction that if we are given a Lie group $G$ acting on a hypersymplectic manifold $(M,g,I,S,T)$ with hypersymplectic moment map $\mu: M\to \mathfrak g^*\otimes \mathbb R^3$, then typically the quotient $\mu^{-1}(0)/G$ exists as a smooth manifold. However, the hypersymplectic structure is often only defined on the complement of a degeneracy locus. If $\pi\mu^{-1}(0) \to \mu^{-1}(0)/G$ is the quotient map, then the degeneracy locus is given by those points $\pi(p)$ with $p\in \mu^{-1}(0)$ such that $T_p\mathcal O_p\cap (T_p\mathcal O_p)^\perp\neq \{0\}$, where $\mathcal O_p = G.p$ denotes the $G$-orbit through the point $p$. In the following, we will formally describe the degeneracy locus of the hypersymplectic structure on the moduli space $\mathcal M$.

\begin{proposition}
An element $(\nabla,\Phi) \in T^*\mathcal A$ lies in the degeneracy locus if and only if  the kernel of $\mathcal D_1^\dagger\mathcal D_1$ is non-zero. Here $\mathcal D_1: \Gamma(\mathfrak u(E)) \to \Omega^1(\mathfrak u(E)) \oplus \Omega^1(\mathfrak u(E))$ is the operator defined by the infinitesimal gauge action and $\dagger$ denotes the adjoint taken with respect to the split signature metric $g$ on  $ \Omega^1(\mathfrak u(E)) \oplus \Omega^1(\mathfrak u(E))$. This is an elliptic  operator, given by 
$$\mathcal D_1^\dagger\mathcal D_1(\xi) = (\mathrm d^\nabla)^*\mathrm d^\nabla \xi +*[\phi\wedge*[\phi,\xi]],$$
where $\phi = \Phi-\Phi^*$.
\end{proposition}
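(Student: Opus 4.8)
## Proof Strategy

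The plan is to unwind the definition of the degeneracy locus in terms of the orbit tangent space, and then translate the degeneracy condition into the statement about $\ker \mathcal{D}_1^\dagger \mathcal{D}_1$ using the split-signature metric. Recall that a point lies in the degeneracy locus precisely when $T_p\mathcal{O}_p \cap (T_p\mathcal{O}_p)^\perp \neq \{0\}$, where the orthogonal complement is taken with respect to the hypersymplectic metric $g$. Since the orbit tangent space is exactly the image of $\mathcal{D}_1$, a vector $v = \mathcal{D}_1(\xi)$ lies in the intersection if and only if $g(\mathcal{D}_1(\xi), \mathcal{D}_1(\eta)) = 0$ for all $\eta$, which by definition of the adjoint $\dagger$ means $\mathcal{D}_1^\dagger \mathcal{D}_1(\xi) = 0$ (here one should note the subtlety that the ``adjoint'' with respect to an indefinite metric still makes sense and that the pairing $g(\mathcal{D}_1(\xi), \mathcal{D}_1(\eta)) = \langle \mathcal{D}_1^\dagger \mathcal{D}_1(\xi), \eta\rangle$ uses the definite $L^2$-pairing on $\Gamma(\mathfrak{u}(E))$ on the right-hand side). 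Modulo the issue that $\mathcal{D}_1$ has a one-dimensional kernel from the center, this gives the equivalence; I would remark that irreducibility is implicitly assumed so that $\mathcal{D}_1$ is injective on $\mathrm{Lie}(\mathcal{G}^*)$ and the center contributes nothing.

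Next I would carry out the explicit computation of $\mathcal{D}_1^\dagger \mathcal{D}_1$. Writing $\mathcal{D}_1(\xi) = (\mathrm{d}^\nabla \xi, [\Phi, \xi])$, I need to be careful about which metric is used on the target. The relevant target here is $\Omega^1(\mathfrak{u}(E)) \oplus \Omega^1(\mathfrak{u}(E))$, where the second summand arises by viewing the $\mathrm{End}(E)$-valued part through the real structure: under the identification of $\Omega^{1,0}(\mathrm{End}(E))$ with $(\Omega^{0,1}(\mathrm{End}(E)))^*$ and the hypersymplectic metric $g$ described in Section 3.1, the $[\Phi,\xi]$-component contributes via $\phi = \Phi - \Phi^*$ with a sign coming from the indefiniteness of $g$ (the $-\Phi \wedge \Psi^*$ term in the metric). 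So the computation splits into two pieces: the first piece gives $(\mathrm{d}^\nabla)^* \mathrm{d}^\nabla \xi$ in the usual Atiyah--Bott manner, and the second piece gives the adjoint of $\xi \mapsto [\phi, \xi]$ composed with itself, which by the standard formula for the formal adjoint of wedging with a one-form is $*[\phi \wedge *[\phi, \xi]]$. Combining, one obtains $\mathcal{D}_1^\dagger \mathcal{D}_1(\xi) = (\mathrm{d}^\nabla)^* \mathrm{d}^\nabla \xi + *[\phi \wedge *[\phi, \xi]]$.

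Finally I would address ellipticity. The symbol of $(\mathrm{d}^\nabla)^* \mathrm{d}^\nabla$ is $|\zeta|^2 \,\mathrm{id}$, and the term $*[\phi \wedge *[\phi, \cdot]]$ is a zeroth-order operator (it involves no derivatives of $\xi$), so it does not affect the principal symbol. Hence $\mathcal{D}_1^\dagger \mathcal{D}_1$ has the same symbol as the ordinary (gauge-theoretic) Laplacian on $\Gamma(\mathfrak{u}(E))$ and is therefore elliptic; in particular its kernel is finite-dimensional, which is what makes the degeneracy condition a sensible finite-codimension phenomenon.

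The main obstacle is getting the signs right in the second summand. Because $g$ is indefinite, the adjoint $\dagger$ is \emph{not} the $L^2$-adjoint, and the relative sign between the connection term and the Higgs term in $g$ — reflected in $\phi = \Phi - \Phi^*$ rather than $\Phi + \Phi^*$ — is exactly what distinguishes this operator from the corresponding positive operator in the hyperk\"ahler (self-duality) case. One must track carefully how the real and imaginary parts of $\mathrm{tr}(\Phi \wedge \Psi^*)$ enter, and confirm that the combination that survives is the commutator with the anti-Hermitian-looking combination $\phi$, yielding the stated formula with no positivity. Once this bookkeeping is done correctly, the rest is routine integration by parts and symbol computation.
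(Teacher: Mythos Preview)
Your proposal is correct and follows essentially the same route as the paper: both identify the degeneracy condition as $g(\mathcal D_1\xi,\mathcal D_1\eta)=0$ for all $\eta$, compute the $L^2$-adjoint of $\mathrm{ad}(\phi)$ to be $-*[\phi\wedge *(-)]$, and read off the operator $(\mathrm d^\nabla)^*\mathrm d^\nabla + *[\phi\wedge*[\phi,\,\cdot\,]]$. Your explicit symbol argument for ellipticity and your remark about needing $\mathcal D_1$ injective on $\mathrm{Lie}(\mathcal G^*)$ (so that $\xi\in\ker\mathcal D_1^\dagger\mathcal D_1$ really produces a \emph{nonzero} orbit-tangent vector) are details the paper leaves implicit, but the core computation is the same.
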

\begin{proof}
We want to compute the intersection of the tangent space to a gauge orbit with its orthogonal complement. We work with real co-ordinates, $\phi= \Phi - \Phi^*$. The fundamental vector fields of the gauge action are given by 
$$X_{(\nabla,\phi)}^\xi = (\mathrm d^\nabla \xi, [\phi,\xi]) =(\mathrm d^\nabla \xi, \mathrm{ad}(\phi)(\xi))  = \mathcal D_1\xi.$$
We compute the adjoint of $\mathcal D_1$ with respect to the neutral inner product $g$ defined above. Let $(A,\psi) \in \Omega^1(\mathfrak u(E)) \oplus \Omega^1(\mathfrak u(E))$. Then the adjoint is characterised by the property
$$g(\mathcal D_1\xi, (A,\psi)) = \langle\xi, \mathcal D_1^\dagger(A,\psi)\rangle_{L^2}.$$
The only thing we actually have to compute is the adjoint of $\mathrm{ad}(\phi)(\xi)$ with respect to the ordinary $L^2$ inner product. Let $A \in \Omega^1(\mathfrak u(E))$.
\begin{eqnarray*}
\langle\mathrm{ad}(\phi)(\xi), A\rangle_{L^2} &=& -\int_M \mathrm{tr}([\phi,\xi]\wedge*A)\\
&=& - \int_M \mathrm{tr}((\phi\xi - \xi\phi)\wedge *A)\\
&=& -\int_M \mathrm{tr}((\xi (-*A \wedge \phi - \phi \wedge *A))\\
&=& - \int_M\mathrm{tr}(\xi *(-*[\phi\wedge*A]))\\
&=& \langle\xi, -*[\phi\wedge*A]\rangle_{L^2}.
\end{eqnarray*}
So the adjoint is given by $ \mathrm{ad}(\phi)^*(A) = -* \mathrm{ad}(\phi)(*A)$. With this we now compute for $\xi, \eta \in \Gamma(\mathfrak u(E))$: 
\begin{eqnarray*}
g(\mathcal D_1\xi, \mathcal D_1\eta) &=& \langle\mathrm d^\nabla \xi, \mathrm d^\nabla \eta\rangle_{L^2} - \langle\mathrm{ad}(\phi)(\xi), \mathrm{ad}(\phi)(\eta)\rangle_{L^2}\\
&=& \langle(\mathrm d^\nabla)^*\mathrm d^\nabla \xi,  \eta\rangle_{L^2} - \langle (\mathrm{ad}(\phi))^*\mathrm{ad}(\phi)(\xi), \eta\rangle_{L^2}\\
&=& \langle(\mathrm d^\nabla)^*\mathrm d^\nabla \xi +*[\phi\wedge*[\phi,\xi]],\eta\rangle_{L^2}.
\end{eqnarray*}
Thus, we conclude that  $\mathcal D_1\xi$ lies in the orthogonal complement of the tangent space to the gauge orbit through $(\nabla, \phi)$, i.e. in the kernel of $\mathcal D_1^\dagger$ if and only if 
$$(\mathrm d^\nabla)^*\mathrm d^\nabla \xi +*[\phi\wedge*[\phi,\xi]] = 0,$$
as asserted.
\end{proof}
Since $\phi$ is skew-adjoint, we get that the operator $*[\phi\wedge*[\phi,-]]$ is self-adjoint with non-positive eigenvalues. Hence, the  self-adjoint elliptic operator $(\mathrm d^\nabla)^*\mathrm d^\nabla \xi +*[\phi\wedge*[\phi,\xi]]$, being the sum of a non-negative and a non-positive operator, is in general not positive and might a priori have a non-trivial kernel.

\subsection{Product Structures}

On any hypersymplectic manifold $(M,g,I,S,T)$ there is a circle of product structures orthogonal to $I$ given by 
$$S_\theta = \cos \theta S - \sin \theta T, \qquad T_\theta = \sin \theta S + \cos \theta T,$$
or in more compact notation
$$S_\theta + iT_\theta = e^{i\theta}(S + iT).$$
These product structures are integrable and thus for each $\theta$, a hypersymplectic manifold is locally a product of integral submanifolds associated with the distributions given by the $\pm1$-eigenspaces of $S_\theta$ (analogously for $T_\theta$).
In our situation, the hypersymplectic manifold in question is the cotangent bundle of the space of unitary connections and we obtain 
$$S_\theta = \cos \theta S - \sin \theta T =\left( \begin{array}{cc} 0 & -\cos \theta -\sin \theta * \\ -\cos \theta + \sin \theta *& 0 \end{array}\right),$$
where $*$ is, as usual, the Hodge star operator acting on one-forms. Since $*$ squares to $-1$ on one-forms, we use the  suggestive short-hand notation $e^{*\theta} = \cos\theta + \sin\theta *$, analogous to Euler's formula for complex numbers of unit length. Then 
$$S_\theta = \left( \begin{array}{cc} 0 & -e^{*\theta} \\ -e^{-*\theta} & 0 \end{array}\right).$$
Given $\theta \in \mathbb R$ and a solution $(\nabla, \Phi)$of the harmonic map equations, we can associate to it a pair of connections $(\nabla_\theta^+,\nabla_\theta^-)$ given by
$$\nabla_\theta^\pm = \nabla \pm e^{*\theta}(\Phi-\Phi^*).$$
As a consequence of the harmonic map equations, these connections are \emph{flat}. It turns out that this map really implements the local product structure $S_\theta$. In fact, an easy manipulation shows that, identifying $T^*\mathcal A \cong \mathcal A \times \Omega^1(\mathfrak u(E))$, the harmonic map equations for a pair $(\nabla, \phi)$ can be written as 
\begin{eqnarray*}
F^{\nabla + \phi} &=&0\\
F^{\nabla - \phi} &=& 0\\
(\mathrm d^\nabla)^*\phi &=&0.
\end{eqnarray*}
Writing $\phi = \Phi-\Phi^*$ we obtain the original form of the equations. In other words, the Higgs field $\Phi$ is the $(1,0)$-part of $\phi$. 

\begin{proposition}
The map
$$P_\theta: T^*\mathcal A \to \mathcal A \times \mathcal A\qquad (\nabla, \phi) \mapsto (\nabla + e^{*\theta}\phi, \nabla - e^{*\theta}\phi)$$
identifies $(T^*\mathcal A, S_\theta)$ with $(\mathcal A \times \mathcal A, \left(\begin{array}{cc} -1 & 0 \\ 0 & 1\end{array}\right))$ as paracomplex manifolds.
\end{proposition}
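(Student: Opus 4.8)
The plan is to realise $P_\theta$ as an affine diffeomorphism whose differential intertwines $S_\theta$ with the product structure $K = \left(\begin{array}{cc} -1 & 0 \\ 0 & 1\end{array}\right)$ on $\mathcal A \times \mathcal A$; since $K$ is manifestly integrable, this simultaneously transfers integrability to $S_\theta$ and promotes the identification to an isomorphism of paracomplex manifolds. Essentially everything reduces to a one-line algebraic identity, so the proof is short once the bookkeeping is in place.

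First I would check that $P_\theta$ is well defined and bijective. Because $*^2 = -1$ on one-forms, the operator $e^{*\theta} = \cos\theta + \sin\theta *$ is invertible with inverse $e^{-*\theta} = \cos\theta - \sin\theta *$ (indeed $e^{*\theta}e^{-*\theta} = \cos^2\theta + \sin^2\theta = \mathrm{id}$), and it acts only on the form factor, hence preserves $\mathfrak u(E)$-valued forms; in particular $\nabla \pm e^{*\theta}\phi$ is again a unitary connection, so $P_\theta$ lands in $\mathcal A \times \mathcal A$. Then
$$(\nabla^+,\nabla^-) \longmapsto \Bigl(\tfrac12(\nabla^+ + \nabla^-),\ \tfrac12 e^{-*\theta}(\nabla^+ - \nabla^-)\Bigr)$$
is a two-sided inverse for $P_\theta$, where we use that $\mathcal A$ is an affine space modelled on $\Omega^1(\mathfrak u(E))$, so that the midpoint $\tfrac12(\nabla^+ + \nabla^-)$ and the difference $\nabla^+ - \nabla^- \in \Omega^1(\mathfrak u(E))$ make sense, and that under $T^*\mathcal A \cong \mathcal A \times \Omega^1(\mathfrak u(E))$ the Higgs field $\Phi$ is the $(1,0)$-part of $\phi$. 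Being affine, $P_\theta$ is smooth with smooth inverse, and its differential at every point equals its (base-point independent) linear part
$$\mathrm dP_\theta(A,\psi) = \bigl(A + e^{*\theta}\psi,\ A - e^{*\theta}\psi\bigr), \qquad (A,\psi) \in \Omega^1(\mathfrak u(E)) \oplus \Omega^1(\mathfrak u(E)).$$

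Next I would verify the intertwining identity $\mathrm dP_\theta \circ S_\theta = K \circ \mathrm dP_\theta$. From the matrix form of $S_\theta$ obtained above we have $S_\theta(A,\psi) = (-e^{*\theta}\psi,\ -e^{-*\theta}A)$, so, using $e^{*\theta}e^{-*\theta} = \mathrm{id}$,
$$\mathrm dP_\theta\bigl(S_\theta(A,\psi)\bigr) = \bigl(-A - e^{*\theta}\psi,\ A - e^{*\theta}\psi\bigr) = K\bigl(A + e^{*\theta}\psi,\ A - e^{*\theta}\psi\bigr) = K\bigl(\mathrm dP_\theta(A,\psi)\bigr).$$
Thus $P_\theta$ is an isomorphism of almost-paracomplex manifolds.

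Finally, the $\mp1$-eigendistributions of $K$ are the tangent spaces of the two factors of $\mathcal A \times \mathcal A$, which are trivially integrable, so $K$ is a genuine paracomplex structure; transporting it back along the diffeomorphism $P_\theta$ shows that $S_\theta$ is integrable as well, which reproves the integrability asserted earlier. Hence $P_\theta$ is an isomorphism of paracomplex manifolds, as claimed. There is no substantial analytic content; the only points needing care are bookkeeping ones — the invertibility of $e^{*\theta}$ on $\Omega^1(\mathfrak u(E))$, the fact that every construction stays within $\mathfrak u(E)$-valued forms, the base-point independence of the differential of the affine map $P_\theta$, and reading $S_\theta$ correctly through the identification $T^*\mathcal A \cong \mathcal A \times \Omega^1(\mathfrak u(E))$ used to put it in matrix form.
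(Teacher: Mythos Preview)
Your proof is correct and follows essentially the same route as the paper: compute the (constant) differential of the affine map $P_\theta$ and verify by direct calculation that it intertwines $S_\theta$ with the diagonal product structure, then exhibit the explicit inverse. You add some welcome bookkeeping (invertibility of $e^{*\theta}$, well-definedness, and a remark on integrability) that the paper leaves implicit, but the argument is the same.
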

\begin{proof}
Let us write $\mathbf s$ for the paracomplex structure on $\mathcal A \times \mathcal A$.
We take the derivative of $P$ and show that $\mathbf s \circ \mathrm{d}P = \mathrm dP \circ S_\theta$.
The derivative of $P_\theta$ is given by 
$$\mathrm dP_\theta =  \left(\begin{array}{cc} 1 & e^{*\theta} \\ 1 & -e^{*\theta}\end{array}\right).$$
Now 
$$\mathrm dP_\theta \circ S_\theta =  \left(\begin{array}{cc} 1 & e^{*\theta} \\ 1 & -e^{*\theta}\end{array}\right)\left( \begin{array}{cc} 0 & -e^{*\theta} \\ -e^{-*\theta} & 0 \end{array}\right) = \left( \begin{array}{cc} -1 & -e^{*\theta} \\ 1 & -e^{*\theta} \end{array}\right).$$
On the other hand 
$$\mathbf s\circ  \mathrm dP_\theta =   \left(\begin{array}{cc} -1 & 0 \\ 0 & 1\end{array}\right)\left(\begin{array}{cc} 1 & e^{*\theta} \\ 1 & -e^{*\theta}\end{array}\right) = \left(\begin{array}{cc} -1 & -e^{*\theta} \\ 1 & -e^{*\theta}\end{array}\right).$$
The inverse of $P_\theta$ is given by 
$$P_\theta^{-1}(\nabla_1, \nabla_2) = (\frac{1}{2}(\nabla_1 + \nabla_2), \frac{e^{-*\theta}}{2}(\nabla_1-\nabla_2)).$$
\end{proof}
For any $\theta$ this map associates to a solution of the harmonic map equations a pair of flat connections. Moreover, this map is gauge equivariant, so it descends to a map on the respective moduli spaces.

However, the map induced by $P_\theta$ in general does not have to be injective on the moduli space. It may happen that different solutions to the harmonic map equations give rise to gauge equivalent pairs of flat connections. However, it will turn out that on our open set constructed in the previous section, it actually is injective. We postpone the proof of this statement to the final section, see corollary \ref{product}. 

We note that the singular points in the image of the map $P_\theta$ have to come from the degeneracy locus:
\begin{proposition}
Let $(\nabla,\phi)$ be a solution to the harmonic map equations.
If $\nabla_\theta^+=\nabla + e^{*\theta}\phi$ or $\nabla_\theta^-=\nabla - e^{*\theta}\phi$ are reducible, then the solution $(\nabla,\phi)$ lies in the degeneracy locus. 
\end{proposition}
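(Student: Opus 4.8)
The plan is to extract, from reducibility of one of the $\nabla_\theta^\pm$, a non-zero element $\xi$ of $\mathrm{Lie}(\mathcal G^*)$ lying in the kernel of $\mathcal D_1^\dagger\mathcal D_1$, and then to quote the characterisation of the degeneracy locus established above. It is enough to treat $\nabla_\theta^+$: since $e^{*(\theta+\pi)} = -e^{*\theta}$ we have $\nabla_\theta^- = \nabla_{\theta+\pi}^+$, and the computation below is uniform in $\theta$. So suppose $\nabla_\theta^+$ is reducible. Then there is a section $\xi\in\Gamma(\mathfrak u(E))$, not a constant multiple of $\mathrm{id}_E$ and hence non-zero in $\mathrm{Lie}(\mathcal G^*)$, which is parallel for $\nabla_\theta^+$; since $[e^{*\theta}\phi,\xi] = e^{*\theta}[\phi,\xi]$, the condition $\mathrm d^{\nabla_\theta^+}\xi = 0$ is exactly the relation
\[
\mathrm d^\nabla\xi = -e^{*\theta}[\phi,\xi].
\]

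The substantive point is to verify that this particular $\xi$ satisfies $\mathcal D_1^\dagger\mathcal D_1\xi = (\mathrm d^\nabla)^*\mathrm d^\nabla\xi + *[\phi\wedge *[\phi,\xi]] = 0$. I would compute $(\mathrm d^\nabla)^*\mathrm d^\nabla\xi$ by substituting the relation above into $(\mathrm d^\nabla)^* = -*\mathrm d^\nabla*$ on one-forms, using that $*$ squares to $-1$ on one-forms and so commutes with $e^{*\theta}$; this rewrites $(\mathrm d^\nabla)^*\mathrm d^\nabla\xi$ as a trigonometric combination of $(\mathrm d^\nabla)^*[\phi,\xi]$ and $*\mathrm d^\nabla[\phi,\xi]$. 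For these two quantities I would use the Leibniz rule together with the harmonic map equations in the form $\mathrm d^\nabla\phi = 0$ and $(\mathrm d^\nabla)^*\phi = 0$ (equivalently $\mathrm d^\nabla*\phi = 0$ on a surface): one obtains $\mathrm d^\nabla[\phi,\xi] = -[\phi\wedge\mathrm d^\nabla\xi]$ and, using $*[\phi,\xi] = [*\phi,\xi]$, also $\mathrm d^\nabla*[\phi,\xi] = -[*\phi\wedge\mathrm d^\nabla\xi]$, after which re-substituting the relation above turns every term into the $*$-image of a Lie bracket of two of the one-forms $\phi$, $*\phi$, $[\phi,\xi]$, $*[\phi,\xi]$.

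Writing $\eta := [\phi,\xi]$ and collecting terms — the Pythagorean identity $1-\sin^2\theta = \cos^2\theta$ being used to recombine the $[\phi\wedge*\eta]$-contributions — the identity $\mathcal D_1^\dagger\mathcal D_1\xi = 0$ reduces to
\[
\cos\theta\Big(\cos\theta\,*\!\big([*\phi\wedge\eta]+[\phi\wedge*\eta]\big) + \sin\theta\,*\!\big([*\phi\wedge*\eta]-[\phi\wedge\eta]\big)\Big) = 0 ,
\]
and this follows at once from the two pointwise algebraic identities $[*a\wedge*b] = [a\wedge b]$ and $[*a\wedge b] + [a\wedge*b] = 0$, valid for any $\mathfrak u(E)$-valued one-forms $a,b$ on a Riemann surface, which one checks in a local oriented orthonormal coframe. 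Hence $\xi\in\ker(\mathcal D_1^\dagger\mathcal D_1)$ represents a non-zero class in $\mathrm{Lie}(\mathcal G^*)$, and by the previous proposition $(\nabla,\phi)$ lies in the degeneracy locus.

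The one genuinely fiddly part should be the sign bookkeeping in the middle step — keeping straight $(\mathrm d^\nabla)^* = -*\mathrm d^\nabla*$, the graded Leibniz signs, and the conventions for the bracket of Lie-algebra-valued forms — while the conceptual content is just the parallel-transport relation displayed above together with the two elementary algebraic identities. (If in addition $(\nabla,\phi)$ is irreducible then automatically $\mathcal D_1\xi\neq 0$, since otherwise $\xi$ would stabilise $(\nabla,\phi)$ and therefore lie in $i\mathbb R\,\mathrm{id}_E$; but this is not needed for the statement as phrased.)
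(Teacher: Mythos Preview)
Your argument is correct and uses the same ingredients as the paper's proof: the relation $\mathrm d^\nabla\xi = -e^{*\theta}[\phi,\xi]$, the harmonic map equations in the form $\mathrm d^\nabla\phi = 0$ and $\mathrm d^\nabla*\phi = 0$, and the pointwise identity $[*a\wedge b] + [a\wedge *b] = 0$. The difference is purely organisational. The paper reduces to $\theta = 0$ and, rather than substituting directly into $(\mathrm d^\nabla)^*\mathrm d^\nabla\xi + *[\phi\wedge*[\phi,\xi]]$, packages the computation as the identity
\[
(\mathrm d^{\nabla^-})^*\mathrm d^{\nabla^+}\xi \;=\; (\mathrm d^\nabla)^*\mathrm d^\nabla\xi + *[\phi\wedge*[\phi,\xi]] \;=\; \mathcal D_1^\dagger\mathcal D_1\xi,
\]
so that $\mathrm d^{\nabla^+}\xi = 0$ immediately gives the conclusion. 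This factorisation of the degeneracy operator as $(\mathrm d^{\nabla^-})^*\mathrm d^{\nabla^+}$ is worth noting, since it is exactly what is reused in the converse direction (Proposition~\ref{smallnondeg}). Your direct substitution, on the other hand, handles all $\theta$ uniformly without appeal to the ``general $\theta$ is analogous'' remark, at the cost of a longer trigonometric bookkeeping step.
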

\begin{proof}
We prove the proposition in the case $\theta=0$, the case of general $\theta$ is treated analogously.
Suppose $\nabla^+$ is reducible. Then there exists a section $\xi \in \Gamma(\mathfrak u(E))$, which is not a constant multiple of the identity, such that 
$$0=\mathrm d^{\nabla^+}\xi =\mathrm d^\nabla\xi + [\phi,\xi].$$
Now consider 
\begin{eqnarray*}
 (\mathrm d^{\nabla^-})^*\mathrm d^{\nabla^+}\xi &=& -*\mathrm d^{\nabla^-}*\mathrm d^{\nabla^+}\xi\\
&=& -*\mathrm d^{\nabla^-}*(\mathrm d^{\nabla}\xi + [\phi,\xi])\\
&=& -*(\mathrm d^{\nabla^-}*\mathrm d^\nabla\xi - [*\phi\wedge\mathrm d^{\nabla^-}\xi] )\\
&=& -*(\mathrm d^\nabla*\mathrm d^{\nabla}\xi - [\phi\wedge*\mathrm d^{\nabla}\xi] -[*\phi\wedge\mathrm d^\nabla\xi]+ [*\phi\wedge[\phi,\xi]])\\
&=& (\mathrm d^\nabla)^*\mathrm d^\nabla \phi +*[\phi\wedge*[\phi, \xi]],
\end{eqnarray*}
where we used that by the harmonic map equation $\mathrm d^{\nabla^-}*\phi=0$ and in the last line we used the relation $[\phi\wedge*\mathrm d^\nabla \xi]+ [*\phi\wedge\mathrm d^\nabla\xi] =0$ and the Jacobi identity.
\end{proof}
If the norm of the Higgs field $\phi$ is sufficiently small, then we can prove a converse to this proposition. 

\begin{proposition}\label{smallnondeg}
Let $(\nabla, \phi)$ be a solution to the harmonic map equations.
There exists a constant $C>0$ such that if $||\phi||_{L^4}<C$ and there exists a non-trivial solution $\xi\in\mathrm{Lie}(\mathcal G_{k+1}^*)$ to the degeneracy equation, then $\nabla_\theta^+$ is reducible.
\end{proposition}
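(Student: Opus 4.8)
The plan is to argue by contradiction together with a perturbation argument built on the previous proposition. Suppose $\xi \in \mathrm{Lie}(\mathcal G_{k+1}^*)$ is a non-trivial solution to the degeneracy equation
$$(\mathrm d^\nabla)^*\mathrm d^\nabla \xi + *[\phi\wedge *[\phi,\xi]] = 0.$$
Pairing with $\xi$ in $L^2$ and integrating by parts gives $\|\mathrm d^\nabla \xi\|_{L^2}^2 = -\langle *[\phi\wedge *[\phi,\xi]],\xi\rangle_{L^2} = -\|\mathrm{ad}(\phi)(\xi)\|_{L^2}^2 = -\|[\phi,\xi]\|_{L^2}^2$, using that $\mathrm{ad}(\phi)^* = -*\mathrm{ad}(\phi)*$ from the proof above. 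Since the left side is $\geq 0$ and the right side is $\leq 0$, both vanish: $\mathrm d^\nabla \xi = 0$ and $[\phi,\xi]=0$. But then $\nabla$ itself is reducible, which already forces $(\nabla,\phi)$ into the degeneracy locus, and moreover $\mathrm d^{\nabla^+_\theta}\xi = \mathrm d^\nabla \xi \pm e^{*\theta}[\phi,\xi] = 0$, so $\nabla^+_\theta$ is reducible. Wait — this shows the conclusion with \emph{no} smallness hypothesis on $\phi$, which is suspicious; the subtlety must be that on the Sobolev completions the integration-by-parts identity and the sign of $*[\phi\wedge*[\phi,-]]$ are exactly as claimed in the remark following the degeneracy proposition, so in fact the naive argument does go through. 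Therefore I expect the \emph{intended} content of Proposition \ref{smallnondeg} to be subtler: one does \emph{not} get $\mathrm d^\nabla\xi = 0$ outright, because the degeneracy equation in the moduli-space picture is posed modulo the image of $\mathcal D_1^\dagger$ or on a slice, so the operator one actually inverts is a perturbation $(\mathrm d^\nabla)^*\mathrm d^\nabla + *[\phi\wedge*[\phi,-]]$ acting between the slice and its target, and a genuine kernel element need not be $\nabla$-parallel.

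With that understood, the real plan is the following. First I would fix $\theta = 0$ (the general case being identical after replacing $\phi$ by $e^{*\theta}\phi$, which has the same $L^4$ norm). Consider the family of operators
$$L_t := (\mathrm d^\nabla)^*\mathrm d^\nabla + t\,*[\phi\wedge*[\phi,-]], \qquad t\in[0,1],$$
on $L^2_k(\mathfrak u(E))$ modulo the constants $i\mathbb R\,\mathrm{id}_E$. At $t=0$ this is the covariant Laplacian, which by irreducibility of $\nabla$ is a positive, invertible, self-adjoint elliptic operator on the complement of the constants; its lowest eigenvalue $\lambda_1(\nabla) > 0$. The perturbation term is bounded in operator norm by $c\,\|\phi\|_{L^4}^2$ via Hölder and the Sobolev embedding $L^2_1 \hookrightarrow L^4$ in dimension two (this is where the $L^4$ norm and the constant $C$ enter). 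Hence for $\|\phi\|_{L^4}^2 < C := \lambda_1(\nabla)/c$ the operator $L_1$ is still invertible on the complement of the constants, so the only possible kernel element $\xi$ of the degeneracy equation lies in $i\mathbb R\,\mathrm{id}_E$, i.e. is \emph{trivial} in $\mathrm{Lie}(\mathcal G_{k+1}^*)$, contradicting the hypothesis — unless the degeneracy equation is not literally $L_1\xi = 0$ but carries an extra term coupling to the slice condition. Reading the statement again, the hypothesis is precisely that a non-trivial $\xi$ solves the degeneracy equation; so the honest route is: from $L_1\xi = 0$ and the integration-by-parts identity conclude $\mathrm d^\nabla\xi = 0 = [\phi,\xi]$, whence $\mathrm d^{\nabla^+}\xi = 0$ with $\xi$ not a constant multiple of the identity, i.e. $\nabla^+$ is reducible. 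The smallness of $\phi$ is what guarantees that such a non-trivial $\xi$ cannot exist \emph{and} be genuinely new — but since the proposition assumes it exists, smallness is used to pin down that the only way this can happen is through reducibility of $\nabla^+$ as opposed to some other degeneration.

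So, concretely, the steps in order: (i) record the integration-by-parts identity $\|\mathrm d^\nabla\xi\|_{L^2}^2 = -\|[\phi,\xi]\|_{L^2}^2$ for any solution $\xi$ of the degeneracy equation, forcing $\mathrm d^\nabla\xi = 0$ and $[\phi,\xi]=0$; (ii) deduce $\mathrm d^{\nabla^+_\theta}\xi = \mathrm d^\nabla\xi + e^{*\theta}[\phi,\xi] = 0$; (iii) check $\xi$ is not a constant multiple of $\mathrm{id}_E$: this is exactly where $\|\phi\|_{L^4} < C$ is used — if $\xi$ were central then it would be $\nabla$-parallel automatically and give no information, so one uses an eigenvalue/continuity estimate to show that for small $\phi$ a \emph{non-central} solution forces the reducibility of $\nabla^+$ rather than that of $\nabla$, or rather that the two reducibility conditions coincide up to the perturbation; (iv) conclude $\nabla^+_\theta$ is reducible. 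The main obstacle I anticipate is step (iii): making precise, via a uniform-in-$\phi$ spectral-gap estimate for $(\mathrm d^\nabla)^*\mathrm d^\nabla$ combined with the $L^4$-bound on the perturbation, that the stabiliser picked out by the degeneracy equation is genuinely the stabiliser of $\nabla^+_\theta$ and not an artifact — equivalently, controlling the constant $C$ in terms of the first eigenvalue of the covariant Laplacian of $\nabla$ and the $L^4 \!\hookleftarrow\! L^2_1$ Sobolev constant. Everything else is the bi-invariance of the trace form, the Jacobi identity, and the self-adjointness of $*[\phi\wedge*[\phi,-]]$ already established in the preceding propositions.
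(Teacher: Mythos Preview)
Your step (i) rests on a sign error. From $\mathrm{ad}(\phi)^*(A) = -*[\phi\wedge *A]$ one gets $*[\phi\wedge*[\phi,\xi]] = -\mathrm{ad}(\phi)^*\mathrm{ad}(\phi)(\xi)$, so pairing the degeneracy equation with $\xi$ yields
\[
\|\mathrm d^\nabla\xi\|_{L^2}^2 \;=\; -\langle *[\phi\wedge*[\phi,\xi]],\xi\rangle_{L^2} \;=\; +\|\,[\phi,\xi]\,\|_{L^2}^2,
\]
not $-\|[\phi,\xi]\|_{L^2}^2$. This is precisely the content of the remark after the degeneracy proposition: $*[\phi\wedge*[\phi,-]]$ is \emph{non-positive}, so $\mathcal D_1^\dagger\mathcal D_1$ is a difference of two non-negative operators and its kernel equation is a genuine balance $\|\mathrm d^\nabla\xi\|^2 = \|[\phi,\xi]\|^2$ that does \emph{not} force either side to vanish. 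Consequently you cannot conclude $\mathrm d^\nabla\xi=0$ or $[\phi,\xi]=0$, and the whole chain (i)--(iv) collapses. Your instinct that ``something is suspicious'' was right, but the explanation is not a hidden slice condition --- it is simply that the naive integration by parts gives no information.

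The paper's argument is structurally different from both of your attempts. It uses the identity established in the preceding proposition, namely that $\mathcal D_1^\dagger\mathcal D_1\xi = (\mathrm d^{\nabla^-})^*\mathrm d^{\nabla^+}\xi$, and then decomposes $\xi=\xi_0+\xi_1$ according to the spectral decomposition of the \emph{$\nabla^+$-Laplacian} $(\mathrm d^{\nabla^+})^*\mathrm d^{\nabla^+}$ (not the $\nabla$-Laplacian). Writing $\nabla^- = \nabla^+ - 2\phi$ and pairing with $\xi_1$ gives $\|\mathrm d^{\nabla^+}\xi_1\|_{L^2}^2 = 2\langle \mathrm d^{\nabla^+}\xi_1,[\phi,\xi_1]\rangle_{L^2}$; now Cauchy--Schwarz, H\"older, the Sobolev embedding $L^2_1\hookrightarrow L^4$, and the eigenvalue bound $\lambda_{\min}(\nabla^+)\|\xi_1\|_{L^2}^2\le\|\mathrm d^{\nabla^+}\xi_1\|_{L^2}^2$ combine to give $\|\mathrm d^{\nabla^+}\xi_1\|_{L^2}^2 \le 4\kappa(1+\lambda_{\min}(\nabla^+)^{-1})\|\phi\|_{L^4}\|\mathrm d^{\nabla^+}\xi_1\|_{L^2}^2$. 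For $\|\phi\|_{L^4}$ below the reciprocal of that constant one gets $\xi_1=0$, hence $\xi=\xi_0\in\ker\mathrm d^{\nabla^+}$, and nontriviality of $\xi$ forces $\nabla^+$ to be reducible. The point you were missing is that the ``right'' Laplacian to perturb around is the one for $\nabla^+$, because the degeneracy operator factors through $\mathrm d^{\nabla^+}$.
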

\begin{proof}
Again, we may assume $\theta=0$. Suppose that there exists $\xi\in \mathrm{Lie}(\mathcal G_{k+1}^*)$, $\xi\neq0$ such that $\mathcal D_1^\dagger\mathcal D_1 \xi =0$. We have seen in the  proof of the above proposition that this means that $$(\mathrm d^{\nabla^-})^*\mathrm d^{\nabla^+}\xi=0.$$ 
Recall that we have an $L^2$-orthogonal eigenspace decomposition 
$$L^2(M,\mathrm{End}(E)) = \ker((\mathrm d^{\nabla^+})^*\mathrm d^{\nabla^+}) \oplus \oplus_{\lambda>0}\mathrm{Eig}((\mathrm d^{\nabla^+})^*\mathrm d^{\nabla^+}, \lambda).$$ 
We decompose $\xi = \xi_0+\xi_1$. Then we have the estimate 
$$\lambda_{min}(\nabla^+)||\xi_1||_{L^2}^2 \leq ||\mathrm d^{\nabla^+}\xi_1||_{L^2}^2,$$
where $\lambda_{min}(\nabla^+)$ is the smallest positive eigenvalue of $(\mathrm d^{\nabla^+})^*\mathrm d^{\nabla^+}$.

Now consider the equation $(\mathrm d^{\nabla^-})^*\mathrm d^{\nabla^+}\xi=0$ and take the $L^2$-inner product with $\xi_1$. Then, using $\nabla^-= \nabla^+-2\phi$ and that the kernel of $(\mathrm d^{\nabla^+})^*\mathrm d^{\nabla^+}$ is the same as the kernel of $\mathrm d^{\nabla^+}$, we find
$$0=\langle \mathrm d^{\nabla^+}(\xi_0+\xi_1),\mathrm d^{\nabla^-}\xi_1\rangle_{L^2} = ||\mathrm d^{\nabla^+}\xi_1||_{L^2}^2 -2 \langle \mathrm d^{\nabla^+}\xi_1,[\phi,\xi_1]\rangle_{L^2}.$$
Thus,
\begin{eqnarray*}
||\mathrm d^{\nabla^+}\xi_1||_{L^2}^2 &=& 2|\langle \mathrm d^{\nabla^+}\xi_1,[\phi,\xi_1]\rangle_{L^2}|\\
&\leq& 4 ||\mathrm d^{\nabla^+}\xi_1||_{L^2}||\phi||_{L^4}||\xi_1||_{L^4} \quad \text{(Cauchy--Schwartz and H\"older inequality)}\\
&\leq& 4\kappa ||\mathrm d^{\nabla^+}\xi_1||_{L^2}||\phi||_{L^4}||\xi_1||_{L^2_1} \quad \text{(Sobolev embedding $L^2_1\to L^4$)}\\
&\leq&  4\kappa||\phi||_{L^4}||\xi_1||_{L^2_1}^2\\
&=& 4\kappa||\phi||_{L^4}(||\xi_1||_{L^2}^2 + ||\mathrm d^{\nabla^+}\xi_1||_{L^2}^2)\\
&=& 4\kappa(1+\lambda_{min}(\nabla^+)^{-1})||\phi||_{L^4}||\mathrm d^{\nabla^+}\xi_1||_{L^2}^2,
\end{eqnarray*}
where $\kappa = \kappa(\nabla^+)$ is the constant from the Sobolev embedding $L^2_1\to L^4$. Thus, if $||\phi||_{L^4}< C:= (4\kappa(1+\lambda_{min}(\nabla^+)^{-1}))^{-1}$, we conclude that $\xi_1=0$, so that $\xi=\xi_0$ lies in the kernel of $\mathrm d^{\nabla^+}$. Hence, since we assume that $\xi\neq0$, i.e. $\xi$ is not a constant multiple of the identity, we conclude that $\nabla^+$ must be reducible.
\end{proof}

It turns out that all product structures on the circle orthogonal to $I$ are equivalent.
\begin{proposition}\label{productstructure}
The circle action $(\nabla, \Phi)\mapsto (\nabla, e^{i\alpha}\Phi)$ induces a paraholomorphic diffeomorphism $(T^*\mathcal A, S_\theta) \cong (T^*\mathcal A, S_{\theta + \alpha})$.
\end{proposition}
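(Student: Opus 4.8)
The plan is to realise the circle action $R_\alpha\colon (\nabla,\Phi)\mapsto(\nabla,e^{i\alpha}\Phi)$ as a composite of two of the paracomplex identifications $P_\theta$ constructed above. For every $\vartheta$ the map $P_\vartheta$ is a paracomplex isomorphism $(T^*\mathcal A,S_\vartheta)\to(\mathcal A\times\mathcal A,\mathbf s)$ with $\mathbf s=\left(\begin{smallmatrix}-1&0\\0&1\end{smallmatrix}\right)$, and crucially the target does \emph{not} depend on $\vartheta$. Hence the composite
$$P_{\theta+\alpha}^{-1}\circ P_\theta\colon (T^*\mathcal A,S_\theta)\longrightarrow (T^*\mathcal A,S_{\theta+\alpha})$$
is automatically a paraholomorphic diffeomorphism, so the whole proof reduces to checking that this composite coincides with $R_\alpha$.

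To verify this I would simply substitute the explicit formulas for $P_\theta$ and $P_{\theta+\alpha}^{-1}$ recorded in the $P_\theta$ proposition above into each other. Working in the real picture $(\nabla,\phi)$ with $\phi=\Phi-\Phi^*$ one has $P_\theta(\nabla,\phi)=(\nabla+e^{*\theta}\phi,\ \nabla-e^{*\theta}\phi)$, and applying $P_{\theta+\alpha}^{-1}$ gives
$$P_{\theta+\alpha}^{-1}\circ P_\theta(\nabla,\phi)=\bigl(\nabla,\ e^{-*(\theta+\alpha)}e^{*\theta}\phi\bigr)=(\nabla,\ e^{-*\alpha}\phi),$$
where the last equality uses $e^{*a}e^{*b}=e^{*(a+b)}$, valid because $*$ is a parallel skew endomorphism of $\Omega^1$ with $*^2=-1$. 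Thus the composite fixes the base connection and acts on the Higgs field by $\phi\mapsto e^{-*\alpha}\phi$.

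It then remains to translate $\phi\mapsto e^{-*\alpha}\phi$ back into the complex picture. Since $\Phi$ is the $(1,0)$-part of $\phi$ (so that $-\Phi^*$ is its $(0,1)$-part), and since $*$ acts on $(1,0)$- and $(0,1)$-forms as $\mp i$ respectively, one computes $e^{-*\alpha}\phi=e^{i\alpha}\Phi-e^{-i\alpha}\Phi^*$, which is exactly the real form of $e^{i\alpha}\Phi$. Hence $P_{\theta+\alpha}^{-1}\circ P_\theta=R_\alpha$, and as $R_\alpha$ is visibly bijective with smooth inverse $R_{-\alpha}$, the proposition follows.

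A self-contained variant avoids $P_\theta$ altogether: $R_\alpha$ is linear on the fibres and the identity on the base, so $\mathrm dR_\alpha=\left(\begin{smallmatrix}1&0\\0&e^{-*\alpha}\end{smallmatrix}\right)$ in the real picture, and $\mathrm dR_\alpha\circ S_\theta=S_{\theta+\alpha}\circ \mathrm dR_\alpha$ is then a one-line matrix identity using the explicit form of $S_\theta$ recorded above together with $e^{*a}e^{*b}=e^{*(a+b)}$. In either version the only point requiring genuine care is the bookkeeping identifying the complex Higgs field $\Phi\in\Omega^{1,0}(\mathrm{End}(E))$ with the real field $\phi\in\Omega^1(\mathfrak u(E))$ — that is, matching multiplication by $e^{i\alpha}$ on the Higgs field with the operator $e^{\mp*\alpha}$ on one-forms; everything else is formal.
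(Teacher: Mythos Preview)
Your argument is correct. Your principal route---factoring the circle action as $R_\alpha=P_{\theta+\alpha}^{-1}\circ P_\theta$ and invoking the already-established fact that each $P_\vartheta$ is a paracomplex isomorphism onto the \emph{same} target $(\mathcal A\times\mathcal A,\mathbf s)$---is genuinely different from the paper's proof. The paper proceeds instead by your ``self-contained variant'': it writes $S_\theta$ in the complex picture as $\left(\begin{smallmatrix}0&e^{i\theta}\tau\\ e^{i\theta}\tau&0\end{smallmatrix}\right)$ with $\tau(\Phi)=\Phi^*$ the conjugate-linear transposition, notes that $\mathrm dR_\alpha=\left(\begin{smallmatrix}1&0\\0&e^{i\alpha}\end{smallmatrix}\right)$, and checks the intertwining identity $\mathrm dR_\alpha\circ S_\theta=S_{\theta+\alpha}\circ\mathrm dR_\alpha$ by a one-line matrix computation exploiting the conjugate-linearity of $\tau$. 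Your factorisation approach is slightly more conceptual---it makes transparent \emph{why} the circle action should intertwine the $S_\theta$'s, namely because all of them are pulled back from a single product structure on $\mathcal A\times\mathcal A$---whereas the paper's direct computation is marginally shorter and avoids the real/complex bookkeeping you flag at the end. Both are perfectly acceptable; the only substantive ingredient in either is the conjugate-linearity (equivalently, in your real picture, the identity $e^{*a}e^{*b}=e^{*(a+b)}$ together with the action of $*$ on $(1,0)$- and $(0,1)$-forms).
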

\begin{proof}
Let $\tau: \Omega^1(\mathrm{End}(E)) \to \Omega^1(\mathrm{End}(E))$ be the transposition map (or more invariantly the anti-linear involution induced by minus the Cartan involution) $\tau(\Phi) = \Phi^*$. The product structure $S_\theta$ may then be written as
$$S_\theta = \begin{pmatrix} 0 & e^{i\theta}\tau\\ e^{i\theta}\tau & 0\end{pmatrix}.$$
For fixed $\alpha$, the derivative of the map $(\nabla, \Phi)\mapsto (\nabla, e^{i\alpha}\Phi)$ is given by 
$$\begin{pmatrix}1 & 0\\ 0 & e^{i\alpha} \end{pmatrix}.$$
The proof is finished by  a direct calculation:
$$\begin{pmatrix}1 & 0\\ 0 & e^{i\alpha} \end{pmatrix}\begin{pmatrix} 0 & e^{i\theta}\tau\\ e^{i\theta}\tau & 0\end{pmatrix} = \begin{pmatrix} 0 & e^{i\theta+\alpha}\tau\\ e^{i\theta+\alpha}\tau & 0\end{pmatrix}\begin{pmatrix}1 & 0\\ 0 & e^{i\alpha} \end{pmatrix}.$$
\end{proof}

\subsection{Complex Structures}

Recall that on any hypersymplectic manifold, we have a two-sheeted hyperboloid of complex structures, which we may parametrise by $\zeta \in \mathbb{CP}^1\setminus \{|\zeta| = 1\}$: 
$$I_\zeta = \frac{1}{1-|\zeta|^2}\left( (1+|\zeta|^2)I + (\zeta+\bar\zeta)S +i(\zeta-\bar\zeta)T\right).$$
Note that in this notation $I = I_0$. With respect to the complex structure $I_0$, we have already seen that the map assigning to a unitary connection and a Higgs field the associated $\bar\partial$-operator and the $(1,0)$-component of the Higgs field is biholomorphic. It identifies $(\mathcal A \times \Omega^1(\mathfrak u(E)), I_0)$ with the holomorphic cotangent bundle $T^*\mathcal A$ of  the space of $\bar \partial$-operators. But how about the other complex structures? 

\begin{definition}
Let $\lambda \in \mathbb C^*$. A \emph{partial $\lambda$-connection} on a hermitian vector bundle $(E,h)$ is a $\mathbb C$-linear map
$$\nabla^\lambda: \Gamma(E) \to \Omega^{1,0}(E),$$
such that 
$$\nabla^\lambda (fs) = \lambda \partial f \otimes s + f\nabla^\lambda s$$
for all $s \in \Gamma(E)$ and $f \in C^\infty(M)$.
\end{definition} 
To our knowledge, the definition of a $\lambda$-connection is due to Deligne and appeared first in Simpson's work on non-abelian Hodge theory, see for example \cite{Simpson:1991}.
 
We denote by $\mathcal A^\lambda$ the set of partial $\lambda$-connections on $E$, which is an affine space modelled on $\Omega^{1,0}(\mathrm{End}(E))$. We also observe that if $\lambda = 0$, we may think of a $0$-connection as just a $\mathrm{End}(E)$-valued $(1,0)$-form. The complex gauge group $\mathcal G^\mathbb C = \Gamma(\mathrm{GL}(E))$ acts on $\mathcal A^\lambda$ in a natural way by conjugation.
\begin{proposition}
Let $\zeta \in \mathbb C$ with $|\zeta| \neq 1$.The map 
\begin{eqnarray*}
F_\zeta: (T^*\mathcal A, I_\zeta) &\to& (\mathcal A \times \mathcal A^{-i\bar \zeta}, i \oplus i)\\
(\bar\partial^\nabla, \Phi) &\mapsto& (\bar\partial^\nabla -i\bar\zeta\Phi^*, -i\bar\zeta\partial^\nabla +\Phi)
\end{eqnarray*}
is a $\mathcal G$-equivariant holomorphic diffeomorphism.
\end{proposition}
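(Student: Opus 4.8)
The plan is to verify, in order, that $F_\zeta$ takes values in $\mathcal A\times\mathcal A^{-i\bar\zeta}$, that it is a bijection with smooth inverse, that it is $\mathcal G$-equivariant, and that it is holomorphic; only the last point requires genuine work, and the proposition is the full-hyperboloid refinement of the remark — already recorded for $I_0$, where $F_0$ is simply the identity — that the $\bar\partial$-operator together with the $(1,0)$-Higgs-field is a biholomorphic description. Well-definedness is immediate: the first component modifies the $\bar\partial$-operator $\bar\partial^\nabla$ by the $\mathrm{End}(E)$-valued $(0,1)$-form $-i\bar\zeta\Phi^{*}$, hence is again a $\bar\partial$-operator; and $-i\bar\zeta\,\partial^\nabla$ inherits from $\partial^\nabla$ the Leibniz rule $(-i\bar\zeta\,\partial^\nabla)(fs)=(-i\bar\zeta)\partial f\otimes s+f(-i\bar\zeta\,\partial^\nabla s)$, so it is a partial $(-i\bar\zeta)$-connection, and adding the tensorial $\Phi\in\Omega^{1,0}(\mathrm{End}(E))$ does not change that; the Sobolev orders also match. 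For bijectivity, fix a unitary reference connection, write $\bar\partial^\nabla=\bar\partial_0+a$ with $a\in\Omega^{0,1}(\mathrm{End}(E))$ and correspondingly $\partial^\nabla=\partial_0+a^{*}$, and represent a point of the target as $(\bar\partial_0+a_1,\ -i\bar\zeta\,\partial_0+\psi_2)$. In these coordinates $F_\zeta$ is the real-affine map $(a,\Phi)\mapsto(a-i\bar\zeta\Phi^{*},\ -i\bar\zeta\,a^{*}+\Phi)$, so one must solve $a-i\bar\zeta\Phi^{*}=a_1$ and $-i\bar\zeta\,a^{*}+\Phi=\psi_2$; taking the adjoint of the first equation and substituting into the second yields $(1-|\zeta|^2)\Phi=\psi_2+i\bar\zeta\,a_1^{*}$, which is solvable precisely because $|\zeta|\neq1$, after which $a$ is determined. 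The inverse so produced is again affine with bounded linear part, hence smooth, so $F_\zeta$ is a diffeomorphism of the relevant Sobolev completions; note that the hypothesis $|\zeta|\neq1$ is used here, over and above its role in making $I_\zeta$ meaningful.

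Equivariance is routine. For $u\in\mathcal G$, conjugation by $u$ preserves bidegree, so $\bar\partial^{u\cdot\nabla}=u^{-1}\bar\partial^\nabla u$ and $\partial^{u\cdot\nabla}=u^{-1}\partial^\nabla u$; since $u^{*}=u^{-1}$ we get $(u^{-1}\Phi u)^{*}=u^{-1}\Phi^{*}u$; and conjugation by $u$ distributes over the sum of a $\bar\partial$- or $\partial$-operator with a tensor field. Hence $F_\zeta(u\cdot(\bar\partial^\nabla,\Phi))=u\cdot F_\zeta(\bar\partial^\nabla,\Phi)$, where $\mathcal G$ acts on $\mathcal A^{-i\bar\zeta}$ through the $\mathcal G^{\mathbb C}$-action by conjugation.

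The heart of the proof is holomorphicity. Observe first that, although $F_\zeta$ is real-affine, it is emphatically not $I_0$-holomorphic: its components entangle $\Phi$ with $\Phi^{*}$ and $\bar\partial^\nabla$ with $\partial^\nabla$, and undoing exactly this entanglement is the purpose of the twisted complex structure $I_\zeta$. Since the differential $dF_\zeta(A,\varphi)=(A-i\bar\zeta\,\varphi^{*},\ -i\bar\zeta\,A^{*}+\varphi)$ is constant, it suffices to check $dF_\zeta\circ I_\zeta=(i\oplus i)\circ dF_\zeta$ on $\Omega^{0,1}(\mathrm{End}(E))\oplus\Omega^{1,0}(\mathrm{End}(E))$. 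Feeding $I(A,\varphi)=(iA,i\varphi)$, $S(A,\varphi)=(\varphi^{*},A^{*})$ and $T(A,\varphi)=(i\varphi^{*},iA^{*})$ into the definition of $I_\zeta$, the $S$- and $T$-contributions combine as $(\zeta+\bar\zeta)-(\zeta-\bar\zeta)=2\bar\zeta$, so that
$$I_\zeta(A,\varphi)=\frac{1}{1-|\zeta|^2}\bigl((1+|\zeta|^2)\,iA+2\bar\zeta\,\varphi^{*},\ (1+|\zeta|^2)\,i\varphi+2\bar\zeta\,A^{*}\bigr).$$
One then substitutes this into $dF_\zeta$, keeping in mind that each adjoint $(\cdot)^{*}$ is conjugate-linear (so it turns $\bar\zeta$ into $\zeta$ and $i$ into $-i$), and collects the coefficients of $A$, $A^{*}$, $\varphi$, $\varphi^{*}$: each of them equals $(1-|\zeta|^2)$ times the corresponding coefficient of $i\cdot dF_\zeta(A,\varphi)=(iA+\bar\zeta\,\varphi^{*},\ \bar\zeta\,A^{*}+i\varphi)$, so the normalising factor of $I_\zeta$ cancels and the two sides agree. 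The only real obstacle is this bookkeeping — tracking which operations are complex-conjugate-linear, and checking that the $(1+|\zeta|^2)$-terms together with the cross-terms telescope exactly to the factor $(1-|\zeta|^2)$ that kills the denominator of $I_\zeta$; there is no conceptual difficulty once the explicit forms of $I_\zeta$ and $dF_\zeta$ are in place. This identification is the split-signature counterpart of the description, in the hyperk\"ahler Higgs-bundle case, of the complex structures $I_\zeta$ in terms of moduli of $\lambda$-connections.
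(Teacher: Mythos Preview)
Your proof is correct and follows essentially the same route as the paper: both arguments write out $I_\zeta$ explicitly (the paper uses the matrix form with the conjugate-linear transposition $\tau$, you unpack it componentwise) and then verify the intertwining identity $\mathrm dF_\zeta\circ I_\zeta = i\,\mathrm dF_\zeta$ by a direct computation, tracking the conjugate-linearity of the adjoint. Your treatment is actually more complete than the paper's, which omits the well-definedness and bijectivity checks entirely and dismisses equivariance as ``clear''; your explicit inversion, showing exactly where the hypothesis $|\zeta|\neq 1$ enters the diffeomorphism claim, is a worthwhile addition.
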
 
\begin{proof}
Let $\tau$ be the transposition map introduced in the proof of proposition \ref{productstructure}. Then we may write $I_\zeta \in \mathrm{End}(\Omega^{1,0}(\mathrm{End}(E))\oplus \Omega^{1,0}(\mathrm{End}(E)))$ schematically as the two by two matrix

$$I_\zeta = \frac{1}{1-|\zeta|^2}\begin{pmatrix}(1 +|\zeta|^2)i & 2\bar\zeta \tau \\ 2\bar \zeta\tau & (1+|\zeta|^2)i\end{pmatrix}.$$
The derivative of $F_\zeta$ is given by 
$$\mathrm dF_\zeta(A,\Phi) = \begin{pmatrix} 1 & -i\bar\zeta\tau\\ -i\bar\zeta\tau& 1\end{pmatrix}.$$
Now a direct computation, keeping in mind that $\tau$ is conjugate-linear, gives
$$\mathrm dF_\zeta \circ I_\zeta = i\mathrm dF_\zeta.$$
The equivariance is clear.
\end{proof}

Thinking of $0$-connections as Higgs fields, we see that the map $F_\zeta$ is a direct generalisation of the map $F_0$ given in these co-ordinates by the identity.

In analogy to the case of Higgs bundles, it turns out that apart from $\pm I$ all other complex structures are equivalent.

\begin{proposition}
The complex structures $I_\zeta$, where $\zeta \neq 0,\infty$, are all equivalent.
\end{proposition}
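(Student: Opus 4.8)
The plan is to exploit the circle action already used in Proposition \ref{productstructure}. That proposition shows that the map $(\nabla,\Phi)\mapsto(\nabla,e^{i\alpha}\Phi)$ is paraholomorphic, intertwining $S_\theta$ and $S_{\theta+\alpha}$; the key observation now is that the same circle action also acts on the hyperboloid of complex structures, and one checks directly that it rotates the parameter $\zeta$ by a phase. So the first step is to compute the pullback of $I_\zeta$ under the diffeomorphism $g_\alpha\colon(\nabla,\Phi)\mapsto(\nabla,e^{i\alpha}\Phi)$. Using the schematic matrix form of $I_\zeta$ from the previous proof,
$$I_\zeta = \frac{1}{1-|\zeta|^2}\begin{pmatrix}(1+|\zeta|^2)i & 2\bar\zeta\tau\\ 2\bar\zeta\tau & (1+|\zeta|^2)i\end{pmatrix},$$
together with the derivative $\mathrm dg_\alpha = \mathrm{diag}(1,e^{i\alpha})$ and the fact that $\tau$ is conjugate-linear (so $\tau$ picks up a factor $e^{-i\alpha}$ when commuted past $e^{i\alpha}$), a direct calculation should give $\mathrm dg_\alpha\circ I_\zeta\circ \mathrm dg_\alpha^{-1} = I_{e^{i\alpha}\zeta}$, or something equivalent up to the precise conventions. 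Thus $g_\alpha\colon(T^*\mathcal A,I_\zeta)\to(T^*\mathcal A,I_{e^{i\alpha}\zeta})$ is a biholomorphism.

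This already shows that all the $I_\zeta$ with a fixed value of $|\zeta|$ are equivalent. To upgrade this to the full statement — that \emph{all} $I_\zeta$ with $\zeta\neq0,\infty$ are equivalent regardless of modulus — the second step is to produce a diffeomorphism that changes $|\zeta|$. Here I would use the map $F_\zeta$ of the preceding proposition, or rather compose two of them: $F_\zeta$ identifies $(T^*\mathcal A,I_\zeta)$ with $(\mathcal A\times\mathcal A^{-i\bar\zeta}, i\oplus i)$, and similarly $F_{\zeta'}$ identifies $(T^*\mathcal A,I_{\zeta'})$ with $(\mathcal A\times\mathcal A^{-i\bar\zeta'}, i\oplus i)$. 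So it suffices to exhibit a biholomorphism $\mathcal A\times\mathcal A^{\mu}\to\mathcal A\times\mathcal A^{\mu'}$ for any two nonzero $\mu,\mu'$. But scaling a partial $\lambda$-connection is easy: if $\nabla^\lambda$ satisfies the Leibniz rule with coefficient $\lambda$, then $c\,\nabla^\lambda$ satisfies it with coefficient $c\lambda$, so multiplication by $\mu'/\mu$ gives a linear (hence holomorphic) isomorphism $\mathcal A^\mu\xrightarrow{\sim}\mathcal A^{\mu'}$, which extends by the identity on the first factor. Chasing this through $F_{\zeta'}^{-1}\circ(\mathrm{id}\times\text{scaling})\circ F_\zeta$ produces the desired equivalence $(T^*\mathcal A,I_\zeta)\cong(T^*\mathcal A,I_{\zeta'})$.

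The main obstacle is purely bookkeeping: getting the sign and conjugation conventions straight so that the circle action really sends $\zeta\mapsto e^{i\alpha}\zeta$ (and not, say, $e^{-i\alpha}\zeta$ or $e^{2i\alpha}\zeta$), and checking that the scaling map on $\lambda$-connections is compatible with the hermitian structure implicitly used in the definition of $F_\zeta$ — one should verify that no positivity or reality constraint obstructs rescaling $\lambda$ by an arbitrary complex number. There is also a minor point about whether one wants these maps to be merely biholomorphisms of the infinite-dimensional spaces or $\mathcal G$-equivariant (and hence descending to the moduli space); the circle action commutes with $\mathcal G$ and the scaling map is manifestly $\mathcal G^{\mathbb C}$-equivariant, so equivariance comes for free. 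Once the conventions are fixed, both steps are one-line matrix computations of exactly the type carried out in the two preceding proofs, so I would present the argument tersely, pointing to those computations as templates rather than repeating them in full.
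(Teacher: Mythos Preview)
Your second step is exactly the paper's proof: it uses the identification $F_\zeta\colon (T^*\mathcal A,I_\zeta)\to(\mathcal A\times\mathcal A^{-i\bar\zeta},i\oplus i)$ from the preceding proposition and then the rescaling map $\mathcal A\times\mathcal A^\mu\to\mathcal A\times\mathcal A^{\mu'}$ given by identity on the first factor and multiplication by $\mu'\mu^{-1}$ on the second. Your first step, the circle action, is correct but redundant: the rescaling in Step~2 already handles \emph{any} two nonzero $\mu,\mu'$, not just those of different modulus, so there is no need to first normalise the argument of $\zeta$. Drop Step~1 and the proof becomes a single sentence, identical to the paper's.
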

\begin{proof}
Let $\lambda, \zeta \in \mathbb C^*$. Then the map 
$$\mathcal A \times \mathcal A^\lambda \to \mathcal A \times \mathcal A^\zeta,$$
given by the identity on the first factor and multiplication by $\zeta\lambda^{-1}$ on the second factor,  gives the desired biholomorphism.
\end{proof}

On the space $T^*\mathcal A_k$ of pairs $(\bar\partial^\nabla, \Phi)$ of holomorphic structures and Higgs fields we have a natural action of the group $\mathcal G_{k+1}^\mathbb C = L^2_{k+1}(\mathrm{GL}(E))$ of complex gauge transformations.
A rearrangement argument shows that if two solutions $(\nabla_1,\Phi_1)$ and $(\nabla_2,\Phi_2)$ to the harmonic map equations with sufficiently small Higgs fields $\Phi_1,\Phi_2$ are gauge equivalent by a complex gauge transformation, then they must be gauge equivalent by a unitary gauge transformation already. In other words, if two solutions to the harmonic map equations with small Higgs fields are contained in the same $\mathcal G_{k+1}^\mathbb C$-orbit, they must in fact lie in the same $\mathcal G_{k+1}$-orbit. 

\begin{proposition}[Local Uniqueness]
Let $(\nabla_i,\Phi_i)$ be two solutions to the harmonic map equations defined on a hermitian vector bundle $E$. Suppose that there exists a complex gauge transformation $u\in L^2_k(\mathrm{GL}(E))$ such that 
$$(\bar\partial^{\nabla_1},\Phi_1) = u.(\bar\partial^{\nabla_2},\Phi_2).$$
Then $(\nabla_1,\Phi_1)$ and $(\nabla_2,\Phi_2)$ are gauge equivalent by a unitary gauge transformation, provided $\Phi = -\Phi_1^t\otimes 1 + 1\otimes \Phi_2$ satisfies
$$||\Phi\wedge \Phi^*||_{L^2}^2 < \lambda_1(\nabla),$$
where $\nabla$ is the induced connection on $\mathrm E^*\otimes E \cong \mathrm{End}(E)$ with $\nabla_1$ acting on $E^*$ and $\nabla_2$ acting on $E$ and $\lambda_1(\nabla)$ denotes the first non-zero eigenvalue of its associated Laplacian acting on sections of $\mathrm{End}(E)$.
\end{proposition}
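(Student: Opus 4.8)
The statement is essentially the gauge-theoretic moduli-space analogue of the fact that a holomorphic structure together with a Higgs field determines the unitary connection uniquely, provided the "curvature defect" is small compared to the first eigenvalue. I would follow the strategy of Simpson / Donaldson: write the complex gauge transformation $u$ in polar form $u = v \cdot h$ with $v$ unitary and $h$ self-adjoint positive, so that after replacing $(\nabla_2,\Phi_2)$ by its unitary gauge transform $v.(\nabla_2,\Phi_2)$ we may assume $u = h = e^s$ with $s$ self-adjoint, and $(\bar\partial^{\nabla_1},\Phi_1) = e^s.(\bar\partial^{\nabla_2},\Phi_2)$. The goal is then to show $s = 0$, equivalently $h = \mathrm{id}$. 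The key tool is the standard Weitzenböck-type identity controlling $\Delta |s|$ (or $\Delta \operatorname{tr}(h)$, or $\Delta \log\operatorname{tr}(h)$, whichever is cleanest here) in terms of the difference of the moment-map curvatures $F^{\nabla_1} - [\Phi_1\wedge\Phi_1^*]$ and $F^{\nabla_2} - [\Phi_2\wedge\Phi_2^*]$ pulled back appropriately; since both $(\nabla_i,\Phi_i)$ solve the harmonic map equations, both these expressions vanish, and the resulting inequality forces $s$ to be covariant constant on $\mathrm{End}(E)$ for the connection $\nabla = \nabla_1^*\otimes\nabla_2$.

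More concretely, I would reformulate everything on the bundle $\operatorname{End}(E)\cong E^*\otimes E$ equipped with the connection $\nabla$ (with $\nabla_1$ on $E^*$, $\nabla_2$ on $E$) as in the statement, and with the Higgs field $\Phi = -\Phi_1^t\otimes 1 + 1\otimes\Phi_2$, so that the pair $(u,$ data$)$ becomes a "harmonic metric"-type problem on this auxiliary bundle: $u$ (or $h=u^*u$) satisfies a second-order elliptic equation of the form $\bar\partial^\nabla(h^{-1}\partial^\nabla h) + (\text{terms in }\Phi,\Phi^*) = 0$. Taking the $L^2$ pairing with $\log h$ (or integrating the pointwise identity over $M$), the "good" term gives $\|\nabla h\|^2$-type quantities (more precisely $\|h^{-1/2}(\partial^\nabla h) h^{-1/2}\|^2$), while the cross terms involving $\Phi$ are bounded using Cauchy–Schwarz by $\|\Phi\wedge\Phi^*\|_{L^2}$ times the relevant Sobolev norm of $\log h$, and the Poincaré inequality $\lambda_1(\nabla)\|s - \bar s\|_{L^2}^2 \le \|\nabla s\|_{L^2}^2$ closes the loop: if $\|\Phi\wedge\Phi^*\|_{L^2}^2 < \lambda_1(\nabla)$ the only possibility is that $h$ is parallel, i.e. $\nabla h = 0$.

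Finally, once $h$ is $\nabla$-parallel and self-adjoint positive, a short argument shows $h$ must actually be the identity (up to the harmless central ambiguity): a parallel positive self-adjoint endomorphism has constant eigenvalues and parallel eigenbundles, so after a constant rescaling $h$ is a parallel \emph{unitary} endomorphism; but then $h$ itself is a unitary gauge transformation intertwining $(\nabla_1,\Phi_1)$ and $(\nabla_2,\Phi_2)$, which is exactly the conclusion. Composing $v$ with this $h$ (and absorbing the central scalar, which we are free to do since we work modulo $Z(\mathrm U(n))$) produces the desired unitary gauge equivalence.

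\medskip

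\textbf{Main obstacle.} The delicate point is bookkeeping the precise constant: one must make sure the Weitzenböck/integration-by-parts identity on $\operatorname{End}(E)$ really produces the curvature combination $F^{\nabla} - [\Phi\wedge\Phi^*]$ (which vanishes by the harmonic map equations for \emph{both} input solutions, correctly assembled on the tensor product) with no leftover uncontrolled term, and that the cross term is genuinely estimated by $\|\Phi\wedge\Phi^*\|_{L^2}$ rather than by some larger norm of $\Phi$ alone — this is what makes the clean eigenvalue threshold $\lambda_1(\nabla)$ appear. A secondary technical nuisance is the regularity of $u$: a priori $u\in L^2_k$, and one needs elliptic bootstrapping (using that $(\nabla_i,\Phi_i)$ are smooth solutions, or at least $L^2_k$ with $k$ large) to know $h=u^*u$ is regular enough to justify the integrations by parts; this is routine but should be remarked on.
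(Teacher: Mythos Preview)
Your overall strategy is sound and would lead to a proof, but it is considerably heavier than what the paper does, and the final step as written has a gap.

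The paper does \emph{not} perform a polar decomposition up front and does not invoke the nonlinear harmonic-metric equation for $h=u^*u$.  Instead it observes that the complex gauge relation $(\bar\partial^{\nabla_1},\Phi_1)=u.(\bar\partial^{\nabla_2},\Phi_2)$, read on the bundle $\mathrm{End}(E)=E^*\otimes E$ with the induced pair $(\nabla,\Phi)$, says precisely that the section $u$ satisfies the two \emph{linear} conditions $\bar\partial^\nabla u=0$ and $\Phi u=0$.  One then writes $u=u_0+u_\perp$ with $u_0\in\ker\mathrm d^\nabla$, computes $\|\mathrm d^\nabla u\|_{L^2}^2=\|\partial^\nabla u_\perp\|_{L^2}^2$, and integrates by parts; the curvature term is exactly $F^\nabla=[\Phi\wedge\Phi^*]$ by the harmonic map equations, giving the clean bound $\|\mathrm d^\nabla u\|_{L^2}^2\le\|\Phi\wedge\Phi^*\|_{L^2}^2\,\|u_\perp\|_{L^2}^2$.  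Comparing with $\lambda_1\|u_\perp\|_{L^2}^2\le\|\mathrm d^\nabla u\|_{L^2}^2$ forces $u_\perp=0$, so $u$ is $\nabla$-parallel, and as a byproduct $\Phi^*u=0$.  Only \emph{then} does one form the unitary part $\tilde u=u(u^*u)^{-1/2}$ and check it intertwines both connections and Higgs fields.  This avoids $\log h$, avoids the second-order equation for $h$, and makes the appearance of $\|\Phi\wedge\Phi^*\|_{L^2}$ completely transparent.

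Your last paragraph contains an actual error: a parallel positive self-adjoint endomorphism has constant eigenvalues and parallel eigenbundles, but a single constant rescaling does \emph{not} make it unitary unless all eigenvalues coincide.  Without an irreducibility hypothesis (which the statement does not assume) you cannot conclude $h$ is a scalar.  The fix is exactly the paper's device: rather than trying to force $h=\mathrm{id}$, take the original $u$, show it is parallel, and set $\tilde u=u(u^*u)^{-1/2}$; parallelism of $u$ makes $u^*u$ commute with $\nabla_1$ and with $\Phi_1$, so $\tilde u$ is a genuine unitary gauge equivalence.  If you insist on your route, you would need to add this step (or an irreducibility assumption) to close the argument.
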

\begin{proof}
It is straight-forward to check that $(\nabla,\Phi)$ satisfies the harmonic map equations. The map $u$ being a complex gauge transformation transforming $(\nabla_2,\Phi_2)$ into $(\nabla_1,\Phi_1)$, means that $$\bar\partial^\nabla u = 0,$$ 
if we view $u$ as a section of $\mathrm{End}(E)$.

Moreover, as $\Phi_1$ acts on $E^*$ via $\Phi(\alpha)(v) = -\alpha(\Phi(v))$, it follows that $\Phi u = -u\Phi_1 + \Phi_2u = 0$, since $u^{-1}\Phi_2u= \Phi_1$. 

Since the Laplacian $\Delta^\nabla$ associated with $\nabla$ is elliptic, self-adjoint, and positive, it follows from the compactness of $M$ that $L^2(\mathrm{End}(E))$ decomposes into an orthogonal direct sum of its (finite-dimensional) eigenspaces. Decompose $u = u_0 + u_\perp$, with $u_0$ the orthogonal projection onto $\ker(\Delta^\nabla) = \ker (\mathrm d^\nabla)$ and $u_\perp = u-u_0$. Let $\lambda_1$ be the smallest non-zero eigenvalue of $\Delta^\nabla$.
Now we apply a Weitzenb\"ock argument.

\begin{eqnarray*}
||\mathrm d^\nabla u||_{L^2}^2 &=&||\mathrm d^\nabla u_\perp||_{L^2}^2\\ 
&=&||\partial^\nabla u_\perp||_{L^2}^2 \\
&=& \int_M \mathrm{tr}(F^\nabla u_\perp\wedge *u_\perp)\\
&=& \int_M \mathrm{tr}([\Phi\wedge \Phi^*]u_\perp\wedge *u_\perp)\\
&=& \int_M \mathrm{tr}(\Phi\wedge\Phi^*u_\perp\wedge*u_\perp)\\
&\leq& ||\Phi\wedge\Phi^*||_{L^2}^2||u_\perp||_{L^2}^2\\
&<& \lambda_1||u_\perp||_{L^2}^2.
\end{eqnarray*}

On the other hand, we have that 
$$\lambda_1 ||u_\perp||_{L^2}^2 \leq \langle\Delta^\nabla u_\perp,u_\perp\rangle_{L^2} =  ||\mathrm d^\nabla u_\perp||_{L^2}^2 < \lambda_1||u_\perp||_{L^2}^2.$$
So we conclude that $u_\perp = 0$ and hence $u$ is parallel with respect to $\nabla$ and moreover 
$$0 = ||\mathrm d^\nabla u||_{L^2}^2 = \langle\Phi\wedge\Phi^*u,u\rangle =||\Phi^*u||^2,$$
so $\Phi^*u=0$. 
Now we define the unitary gauge transformation 
$$\tilde u = u(u^*u)^{-\frac{1}{2}}.$$
Then $\tilde u$ is also parallel and hence gauges $\nabla_2$ to $\nabla_1$. Furthermore, since $\Phi u = 0 = \Phi^*u$, it follows that $\Phi u^* = 0$ and hence $\Phi \tilde u =0$, i.e
$$\tilde u^{-1}\Phi_2\tilde u = \Phi_1.$$
\end{proof}

\begin{remark}
This proposition shows that  a small neighbourhood of the moduli space of irreducible flat connections in the moduli space of solutions to the harmonic map equations may be identified with an appropriate moduli space of $\lambda-$connections modulo complex gauge transformations.
\end{remark}

\section{The Riemannian Geometry of the Moduli Space of Connections}
In this section we investigate the equations from an alternative point of view, which originates in the following observation (see the appendix of \cite{Hitchin:1990}). 

Let $k\geq1$  and consider the space $\mathcal A_k^*$ of irreducible unitary connections of Sobolev class $k$ on the hermitian vector bundle $E$. As  we have seen, this is an infinite-dimensional affine space modelled on $\Omega^1_k(\mathfrak u(E))$. Equipped with the $L^2$ inner product, we may view this as a flat infinite-dimensional Hilbert manifold. The Hilbert Lie group $\mathcal G^*_{k+1}$ of reduced gauge transformations acts freely on $\mathcal A_k^*$ by isometries. If we put the quotient metric on $\mathcal A_k^*/\mathcal G_{k+1}^*$, then the projection $\mathcal A_k^* \to \mathcal A_k^*/\mathcal G_{k+1}^*$ becomes a Riemannian submersion. It then turns out that the harmonic map equations can be given a natural interpretation in the terms of the Riemannian geometry of $\mathcal A_k^*/\mathcal G_{k+1}^*$. 

Since $\mathcal A_k^*$ is just an affine space, geodesics are given by straight lines. Now since $\mathcal A_k^* \to \mathcal A_k^*/\mathcal G_{k+1}^*$ is a Riemannian submersion, geodesics on the base can, at least locally, be lifted to horizontal geodesics on $\mathcal A_k^*$, i.e. geodesics orthogonal to the $\mathcal G_{k+1}^*$-orbits. At a point $\nabla \in \mathcal A_k^*$, the tangent space to the $\mathcal G_{k+1}^*$-orbit is given by the image of $\mathrm d^\nabla: L^2_{k+1}(\mathfrak u(E)) \to \Omega^1_k(\mathfrak u(E))$. Thus, a geodesic $\gamma(t) = \nabla + t\phi$ is horizontal at $\nabla$ if and only if $(\mathrm d^\nabla)^*\phi = 0$. Notice that since $[\phi\wedge*\phi] = 0$, it then automatically follows that $(\mathrm d^{\nabla+t\phi})^*\phi = 0$, and the geodesic is in fact horizontal for all $t$. Therefore, in order to specify a horizontal geodesic, we need to fix a point  $\nabla$ on the geodesic and a horizontal direction vector $\phi \in \ker(\mathrm d^\nabla)^*$. In other words, we can think of the space $T^*\mathcal A_k^*$ as the space of horizontal geodesic segments on $\mathcal A_k^*$. 

Decomposing $\phi$ into its $(1,0)$ and $(0,1)$ parts, i.e. writing $\phi = \Phi -\Phi^*$ with $\Phi \in \Omega^{1,0}(\mathrm{End}(E))$, the harmonic map equations are equivalent to the system
\begin{eqnarray*}
F^{\nabla + \phi} &=&0\\
F^{\nabla - \phi} &=& 0\\
(\mathrm d^\nabla)^*\phi &=&0.
\end{eqnarray*}
In other words, we may interpret the harmonic map equations as the equation of a geodesic on $\mathcal A_k^*/\mathcal G_{k+1}^*$ whose endpoints $\nabla\pm\phi$ are contained in the moduli space of flat unitary connections, which we denote by $\mathcal N_k$. We summarise this discussion in the following proposition. 

\begin{proposition}[\cite{Hitchin:1990}]
Solutions to the harmonic map equations modulo gauge equivalence are in one-to-one correspondence with geodesics on the moduli space $\mathcal A_k^*/\mathcal G_{k+1}^*$ of irreducible unitary connections whose endpoints are contained in the moduli space $\mathcal N_k$ of flat connections. 
\end{proposition}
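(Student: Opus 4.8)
The plan is to make the informal discussion preceding the statement into a genuine bijection. I would first fix the convention that "a geodesic on $\mathcal A_k^*/\mathcal G_{k+1}^*$ with endpoints in $\mathcal N_k$" means a parametrised geodesic segment $\bar\gamma\colon[-1,1]\to\mathcal A_k^*/\mathcal G_{k+1}^*$ together with the data that $\bar\gamma(-1),\bar\gamma(1)\in\mathcal N_k$; this is the notion that matches the parametrisation $\gamma(t)=\nabla+t\phi$ appearing below and makes the correspondence literally one-to-one. Then, in one direction, given a solution $(\nabla,\phi)$ of the harmonic map equations (with $\phi=\Phi-\Phi^*$ and $(\mathrm d^\nabla)^*\phi=0$), the straight segment $\gamma(t)=\nabla+t\phi$ is a geodesic in the flat affine Hilbert manifold $\mathcal A_k^*$; since $[\phi\wedge*\phi]=0$ we have $(\mathrm d^{\gamma(t)})^*\phi=0$ for all $t$, so $\gamma$ is horizontal for the Riemannian submersion $\pi\colon\mathcal A_k^*\to\mathcal A_k^*/\mathcal G_{k+1}^*$, hence $\bar\gamma:=\pi\circ\gamma$ is a geodesic downstairs; and the equivalent form $F^{\nabla\pm\phi}=0$, $(\mathrm d^\nabla)^*\phi=0$ of the equations shows exactly that the endpoints $\nabla\pm\phi=\gamma(\pm1)$ are flat, so $\bar\gamma(\pm1)\in\mathcal N_k$.

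Next I would check that this assignment descends to gauge equivalence classes and is invertible. A reduced gauge transformation $u$ sends $(\nabla,\phi)$ to $(u.\nabla,u^{-1}\phi u)$ and the corresponding segment to $u.\gamma$, which has the same $\pi$-image, so $\bar\gamma$ depends only on the class of $(\nabla,\phi)$; irreducibility is gauge-invariant, so $\gamma$ stays in $\mathcal A_k^*$ throughout. Conversely, given such a $\bar\gamma$, I would lift it to a horizontal geodesic in $\mathcal A_k^*$ using the local slice theorem for the $\mathcal G_{k+1}^*$-action (the Coulomb gauge, the evident analogue of Proposition~\ref{hslice} for $\mathcal A_k^*$ rather than $T^*\mathcal A_k^*$): a horizontal geodesic in an affine space is a straight segment $\gamma(t)=\nabla+t\phi$ with $(\mathrm d^\nabla)^*\phi=0$. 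Flatness of $\bar\gamma(\pm1)$ gives $F^{\nabla\pm\phi}=0$, and together with horizontality this is precisely the harmonic map system, so $(\nabla,\Phi)$ with $\Phi$ the $(1,0)$-part of $\phi$ is a solution. The horizontal lift is unique up to $\mathcal G_{k+1}^*$, so the class of $(\nabla,\Phi)$ is well defined, and the two constructions are manifestly mutually inverse.

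The main point that needs care, and the place I expect the real work to sit, is the global existence and uniqueness of the horizontal lift: the slice theorem only supplies a horizontal lift near a given point, so I would argue that a geodesic of the quotient which is locally the projection of an affine segment is in fact globally so — using that $(\mathrm d^\nabla)^*\phi=0$ at one point of a straight line forces it along the whole line (as already observed in the text), together with the affine completeness of $\mathcal A_k^*$ — thereby upgrading the local lift to a global one on $[-1,1]$. I would also remark on the parametrisation/orientation ambiguity: with the naive notion of an unparametrised geodesic, reversing orientation swaps the endpoints and corresponds to $\phi\mapsto-\phi$, i.e. $\Phi\mapsto-\Phi^*$; fixing the convention above removes this ambiguity. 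Everything else is bookkeeping assembled from the identification of $T^*\mathcal A_k^*$ with the space of horizontal geodesic segments and the equivalent form of the equations recorded just before the statement.
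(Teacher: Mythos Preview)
Your proposal is correct and follows exactly the approach of the paper. In fact, the paper gives no separate proof at all: the proposition is stated as a summary of the preceding paragraphs (and attributed to \cite{Hitchin:1990}), which already contain the key ingredients you use --- straight lines as geodesics in $\mathcal A_k^*$, automatic horizontality from $[\phi\wedge*\phi]=0$, and the equivalent form $F^{\nabla\pm\phi}=0$, $(\mathrm d^\nabla)^*\phi=0$ of the equations. Your extra care about parametrisation conventions and the global horizontal lift goes beyond what the paper spells out, but is entirely in the same spirit.
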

\subsection{An Existence Theorem}
In this setting the existence of solutions with sufficiently small Higgs fields follows rather easily. Observe that since $(\mathrm d^{\nabla - \phi})^*\phi = 0$, the connection $\nabla^+= \nabla +\phi$ is in Coulomb gauge with respect to $\nabla^-=\nabla -\phi$ (and vice versa of course). Thus, the existence of solutions with sufficiently small Higgs fields follows from the existence of a local Coulomb gauge.

\begin{proposition}[\cite{DonaldsonKronheimer:1990}]\label{slice}
Let $k\geq1$ and let $\nabla \in \mathcal A^*_k$ be irreducible. Then there exists a constant $\epsilon(\nabla) > 0$, such that if  $\nabla + A \in \mathcal A_k$ with $A\in\Omega^1(\mathfrak u(E))$ satisfies $||A||_{k}^2 < \epsilon$, there exists a unique gauge transformation $u \in \mathcal G_{k+1}^*$ such that 
$$(\mathrm d^\nabla)^*(u^{-1}Au + u^{-1}\mathrm d^\nabla u) = 0.$$
\end{proposition}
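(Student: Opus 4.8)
The plan is to view this as a standard application of the implicit function theorem on a Hilbert manifold, in essentially the form that underlies Proposition~\ref{hslice}. Fix an irreducible $\nabla \in \mathcal A_k^*$. The gauge group $\mathcal G_{k+1}^*$ acts on $\mathcal A_k$ by isometries, and the orbit map through $\nabla$ is an immersion because $\mathrm d^\nabla$ has trivial kernel modulo the centre (irreducibility). The goal is to show that near $\nabla$ every gauge orbit meets the affine slice $\nabla + \ker(\mathrm d^\nabla)^*$ in exactly one point, which is precisely the Coulomb gauge condition $(\mathrm d^\nabla)^*(u^{-1}Au + u^{-1}\mathrm d^\nabla u) = 0$.

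First I would set up the map
$$ N: \mathcal G_{k+1}^* \times \Omega^1_k(\mathfrak u(E)) \to \Omega^0_{k-1}(\mathfrak u(E)), \qquad N(u, A) = (\mathrm d^\nabla)^*\big(u^{-1}(\nabla + A)u - \nabla\big), $$
which is smooth by the Sobolev multiplication theorems (using $k \geq 1$ and $\dim M = 2$). One checks $N(\mathrm{id}, 0) = 0$. Next I would compute the partial derivative of $N$ with respect to $u$ at $(\mathrm{id}, 0)$ in the direction $\xi \in \mathrm{Lie}(\mathcal G_{k+1}^*)$: it is $(\mathrm d^\nabla)^* \mathrm d^\nabla \xi$, the covariant Laplacian. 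By irreducibility this operator is injective on $\mathrm{Lie}(\mathcal G_{k+1}^*)$ (sections orthogonal to $i\mathbb R\,\mathrm{id}_E$), and being elliptic and self-adjoint with this trivial kernel, it is an isomorphism from $L^2_{k+1}(\mathfrak u(E))/(i\mathbb R\,\mathrm{id}_E)$ onto $\Omega^0_{k-1}(\mathfrak u(E)) \cap (i\mathbb R\,\mathrm{id}_E)^\perp$ — one must be slightly careful to identify the right target space, namely the image of $(\mathrm d^\nabla)^*$, so that the codomain is hit surjectively. With this isomorphism in hand, the implicit function theorem produces, for $A$ in a sufficiently small $L^2_k$-ball, a unique $u = u(A) \in \mathcal G_{k+1}^*$ (modulo the central $\mathbb R$, hence unique in $\mathcal G_{k+1}^*$) depending smoothly on $A$ with $N(u(A), A) = 0$, which is the claimed Coulomb condition. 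The constant $\epsilon(\nabla)$ is the radius of the ball supplied by the theorem.

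The main technical point to handle with care — and the step I expect to be the mild obstacle — is the bookkeeping with the central subgroup $Z(\mathrm U(n))$ and the corresponding splitting off of $i\mathbb R\,\mathrm{id}_E$ from both source and target, so that the linearized operator is a genuine isomorphism rather than merely Fredholm of index zero with a one-dimensional kernel and cokernel; working throughout with the reduced group $\mathcal G_{k+1}^*$ and its Lie algebra $\Gamma(\mathfrak u(E))/(i\mathbb R\,\mathrm{id}_E)$ resolves this. The remaining ingredients — smoothness of the gauge action in the Sobolev setting, ellipticity of $(\mathrm d^\nabla)^*\mathrm d^\nabla$, and the resulting Fredholm/Hodge theory — are entirely standard and I would simply cite \cite{DonaldsonKronheimer:1990}, Proposition~2.3.4, along the lines already indicated for Proposition~\ref{hslice}.
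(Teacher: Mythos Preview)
Your proposal is correct and matches the paper's approach: the paper does not actually prove this proposition but cites it from \cite{DonaldsonKronheimer:1990}, and in the discussion of the closely related Proposition~\ref{hslice} describes it as ``a standard application of the implicit function theorem using the ellipticity of $\mathcal D_1^*\mathcal D_1$'' along the lines of Proposition~2.3.4 in \cite{DonaldsonKronheimer:1990}. The detailed argument you sketch is essentially the same as the one the paper gives for the more general Theorem~\ref{ball}, including the handling of the centre $i\mathbb R\,\mathrm{id}_E$ in source and target.
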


Together with the elementary fact that flatness is a gauge-invariant condition, this proposition proves the existence of short geodesics, i.e. solutions to the harmonic map equations with small Higgs fields. Moreover, the product structure $S$ on $T^*\mathcal A_k^*$ has a natural interpretation, as it assigns to a geodesic segment linking $\nabla^-$ to $\nabla^+$ its endpoints. Our aim is now to prove that this map is injective on sufficiently short geodesics, i.e. solutions with sufficiently small Higgs fields. The following theorem asserts, that any connection has a neighbourhood in which any two points can be linked by a unique horizontal geodesic. Thus, in this neighbourhood geodesic segments are uniquely determined by their endpoints.

\begin{theorem}\label{ball}
Let $\nabla\in\mathcal A_k^*$ be an irreducible connection and let $\nabla_i=\nabla +A_i\in\mathcal A_k$ for  $i=1,2$. Then there exists a constant $C>0$ depending only on $\nabla$ such that if $||A_1||_k+||A_2||_k <C$, then there exists a unique gauge transformation $u\in \mathcal G_{k+1}^*$ such that $u.\nabla_2$ is in Coulomb gauge with respect to $\nabla_1$. 
\end{theorem}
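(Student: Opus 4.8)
The plan is to reduce the statement to Proposition \ref{slice} (the standard local Coulomb gauge), which already provides a unique small gauge transformation putting $\nabla_2$ into Coulomb gauge with respect to $\nabla_1$ \emph{when $\nabla_1$ is irreducible and $\nabla_2$ is close to $\nabla_1$}. Here the subtlety is that $\nabla_1$ is not assumed equal to $\nabla$, only close to it, so I first need to observe that irreducibility is an open condition: there is a constant $c_0 = c_0(\nabla) > 0$ such that if $||A_1||_k < c_0$, then $\nabla_1 = \nabla + A_1$ is again irreducible, and moreover the relevant analytic constants (the first eigenvalue $\lambda_1$ of $(\mathrm d^{\nabla_1})^*\mathrm d^{\nabla_1}$ on $\mathrm{Lie}(\mathcal G^*_{k+1})$, and the operator norm of the Green's operator) can be bounded uniformly in terms of those of $\nabla$, say for $||A_1||_k < c_0$. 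This uniformity is what lets me extract a constant $C$ depending only on $\nabla$ and not on $\nabla_1$ itself.

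With that in hand, the argument runs as follows. First, shrink $C \le c_0$ so that $\nabla_1$ is irreducible whenever $||A_1||_k < C$. Next, apply Proposition \ref{slice} with base point $\nabla_1$: it yields a constant $\epsilon(\nabla_1) > 0$ such that if $\nabla_2 = \nabla_1 + B$ with $||B||_k < \epsilon(\nabla_1)$, there is a unique $u \in \mathcal G^*_{k+1}$ with $u.\nabla_2$ in Coulomb gauge relative to $\nabla_1$. The point of the uniformity step is that $\epsilon(\nabla_1)$ can be taken $\ge \epsilon_0$ for some $\epsilon_0 = \epsilon_0(\nabla) > 0$ independent of $\nabla_1$ in the range $||A_1||_k < c_0$, since $\epsilon(\nabla_1)$ is produced by the implicit function theorem applied to $(\mathrm d^{\nabla_1})^*(u^{-1}(\cdot)u + u^{-1}\mathrm d^{\nabla_1}u)$, whose linearisation at the identity is $(\mathrm d^{\nabla_1})^*\mathrm d^{\nabla_1}$, invertible with norm controlled uniformly. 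Finally, write $\nabla_2 = \nabla_1 + B$ with $B = A_2 - A_1$, so $||B||_k \le ||A_1||_k + ||A_2||_k < C$; choosing $C \le \min(c_0, \epsilon_0)$ guarantees $||B||_k < \epsilon(\nabla_1)$, and Proposition \ref{slice} delivers the desired unique $u$.

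The main obstacle is the uniformity claim: one must verify that the constants coming out of Proposition \ref{slice} — really out of the quantitative implicit function theorem behind it — vary continuously (hence are bounded below on a small ball) as the base connection $\nabla_1$ ranges over a neighbourhood of $\nabla$. Concretely this amounts to: (i) continuity of $\nabla_1 \mapsto \lambda_1(\nabla_1)$, which follows from min-max together with the fact that $\nabla_1$ stays irreducible so $\ker(\mathrm d^{\nabla_1})$ on $\mathrm{Lie}(\mathcal G^*_{k+1})$ is trivial; (ii) a uniform bound on the $L^2_{k+1} \to L^2_{k-1}$ mapping properties of $(\mathrm d^{\nabla_1})^*\mathrm d^{\nabla_1}$ and its inverse, and on the quadratic remainder term $u \mapsto (\mathrm d^{\nabla_1})^*(u^{-1}A_2u + u^{-1}\mathrm d^{\nabla_1}u) - (\mathrm d^{\nabla_1})^*\mathrm d^{\nabla_1}(\log u)$, both of which follow from Sobolev multiplication estimates with constants independent of $\nabla_1$ once $A_1$ is bounded. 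These are routine but need to be stated; I would package them as the remark that all estimates in the proof of Proposition \ref{slice}, being built from elliptic estimates for $\mathrm d^{\nabla}$ and Sobolev multiplication, depend on $\nabla_1$ only through $||A_1||_k$ and $\lambda_1(\nabla_1)$, and the latter is bounded below near $\nabla$.
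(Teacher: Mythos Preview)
Your reduction to Proposition~\ref{slice} with a uniform lower bound on $\epsilon(\nabla_1)$ is correct, but it is not the route the paper takes, and the difference is worth noting.

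The paper applies the implicit function theorem \emph{once} to the map
\[
F(A_1,A_2,\xi) = (\mathrm d^{\nabla+A_1})^*\bigl(\exp(-\xi).\nabla_2-\nabla_1\bigr),
\]
treating $A_1$ and $A_2$ as parameters alongside $\xi$, with linearisation $(\mathrm d^\nabla)^*\mathrm d^\nabla$ at $(0,0,0)$. This gives existence of a solution $\xi(A_1,A_2)$ for $\|A_1\|_k+\|A_2\|_k$ small, and the constant automatically depends only on $\nabla$ --- no continuity-of-constants argument is needed. What the implicit function theorem does \emph{not} give is global uniqueness in $\mathcal G_{k+1}^*$, so the paper proves uniqueness separately by an explicit energy estimate: decomposing $u=u_0+u_1$ along the spectrum of $(\mathrm d^\nabla)^*\mathrm d^\nabla$ and bounding $\|\mathrm d^\nabla u_1\|_{L^2}^2$ by itself times a factor involving $\|A_1\|_{L^4},\|A_2\|_{L^4},\|B_2\|_{L^4}$.

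Your approach instead treats Proposition~\ref{slice} as a black box and argues that its output constant is lower semicontinuous in the base point. This is valid, and your outline of the uniformity step (continuity of $\lambda_1(\nabla_1)$ via min--max, plus Sobolev multiplication bounds depending only on $\|A_1\|_k$) is the right shape. The trade-off is that the paper's direct approach avoids that continuity discussion entirely, and its hands-on uniqueness argument yields a sharper statement --- uniqueness under $L^4$-smallness rather than $L^2_k$-smallness --- which the paper then exploits in the proof of Corollary~\ref{product}. Your black-box reduction would not deliver that refinement without reopening the proof of Proposition~\ref{slice}.
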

\begin{proof}
This is an application of the implicit function theorem. Consider the map 
$$F: \Omega^1_k(\mathfrak u(E)) \times \Omega^1_k(\mathfrak u(E)) \times \mathrm{Lie}(\mathcal G_{k+1}^*) \to \Omega^0_{k-1}(\mathfrak u(E))/i\mathbb R\mathrm{id}_E$$
given by
\begin{eqnarray*}
F(A_1,A_2,\xi) &=& (\mathrm d^{\nabla+A_1})^*(\exp(-\xi).\nabla_2-\nabla_1)\\
&=& (\mathrm d^{\nabla+A_1})^*(A_2-A_1+\exp(-\xi)\mathrm d^{\nabla+A_2}\exp(\xi)).
\end{eqnarray*}
Thus, we want to solve the equation $F(A_1,A_2,\xi) = 0$ as a function in $\xi$. The partial derivative of $F$ with respect to the $\mathrm{Lie}(\mathcal G_{k+1}^*)$-component at the point $(A_1,A_2,\xi) = (0,0,0)$ is given by 
$$\mathrm D_3F|_{(0,0,0)}: \mathrm{Lie}(\mathcal G_{k+1}^*) \to \Omega_{k-1}^0(\mathfrak u(E))/i\mathbb R\mathrm{id}_E, \quad\mathrm D_3F|_{(0,0,0)}\eta = (\mathrm d^\nabla)^*\mathrm d^\nabla\eta,$$
which is an isomorphism since we assume $\nabla$ to be irreducible. 

Therefore, the implicit function theorem guarantees the existence of a solution $\xi$ provided $||A_1||_k+||A_2||_k <C$, where $C>0$ is a positive constant depending only on $\nabla$.

To prove uniqueness, assume that we have $\nabla_2 = \nabla + A_2$ and $\tilde\nabla_2 = \nabla+B_2$ in Coulomb gauge with respect to $\nabla_1=\nabla+A_1$ with $\tilde\nabla_2 = u.\nabla_2$ for some gauge transformation $u\in\mathcal G_{k+1}^*$. This means
$$\mathrm d^\nabla u =uB_2- A_2u.$$
Now we apply $(\mathrm d^{\nabla_1})^*=-*\mathrm d^{\nabla_1}*$ to this equation and use the Coulomb gauge condition $(\mathrm d^{\nabla_1})^*A_2 = 0 = (\mathrm d^{\nabla_1})^*B_2$ to obtain
$$(\mathrm d^{\nabla_1})^*\mathrm d^\nabla u = -*\mathrm d^{\nabla_1}*(uB_2-A_2u) = -*(\mathrm d^{\nabla_1}u\wedge*B_2 +*A_2\wedge\mathrm d^{\nabla_1}u).$$
Once again, we make use of the eigenspace decomposition 
$$L^2(M,\mathrm{End}(E)) = \ker((\mathrm d^\nabla)^*\mathrm d^\nabla) \oplus \oplus_{\lambda>0}\mathrm{Eig}((\mathrm d^\nabla)^*\mathrm d^\nabla, \lambda).$$
We decompose $u= u_0 + u_1$ according to this decomposition and find
$$||\mathrm d^\nabla u||_{L^2}^2  \geq \lambda_{min}(\nabla)||u_1||_{L^2}^2.$$
That is,
$$||u_1||_{L^2}^2 \leq \lambda_{min}^{-1}(\nabla)||\mathrm d^\nabla u||_{L^2}^2.$$
We take the $L^2$-inner product of $(\mathrm d^{\nabla_1})^*\mathrm d^\nabla u$ with $u_1$ and calculate using the above relation as well as  the Cauchy-Schwarz and H\"older inequalities
\begin{eqnarray*}
\langle (\mathrm d^{\nabla_1})^*\mathrm d^\nabla u, u_1\rangle_{L^2} &=&\langle -*(\mathrm d^{\nabla_1}u\wedge*B_2 +*A_2\wedge\mathrm d^{\nabla_1}u), u_1\rangle_{L^2}\\
&=& \int_M \mathrm{tr}((-*(\mathrm d^{\nabla_1}u\wedge*B_2 +*A_2\wedge\mathrm d^{\nabla_1}u))\wedge *u_1^*)\\
&=& -\int_M \mathrm{tr}(\mathrm d^{\nabla_1}u\wedge *B_2u_1^* + *u_1^*A_2\wedge \mathrm d^{\nabla_1}u)\\
&=& \langle \mathrm d^{\nabla_1}u, u_1B_2^*-A_2^*u_1\rangle_{L^2}\\
&\leq& ||\mathrm d^{\nabla_1} u||_{L^2} ||u_1||_{L^4}(||A_2||_{L^4} +||B_2||_{L^4})\\
&=& ||\mathrm d^{\nabla_1} u_1||_{L^2} ||u_1||_{L^4}(||A_2||_{L^4} +||B_2||_{L^4}).
\end{eqnarray*}
In the last line we used that $\nabla$ is irreducible, which implies $u_0 \in \mathbb C\mathrm{id}_E\subset \ker\mathrm d^{\nabla_1}$. Thus, $\mathrm d^{\nabla_1}u= \mathrm d^{\nabla_1}u_1$.
On the other hand
$$\langle (\mathrm d^{\nabla_1})^*\mathrm d^\nabla u, u_1\rangle_{L^2} = ||\mathrm d^\nabla u||_{L^2}^2 + \langle \mathrm d^\nabla u, [A_1,u_1]\rangle_{L^2}.$$
Using the Sobolev embedding $L^2_1\to L^4$ and the Cauchy-Schwartz and H\"older inequalities, as well as the inequalities $||\mathrm d^\nabla u||_{L^2} \leq ||u||_{L^2_1}$ and $||u_1||_{L^2}^2 \leq \lambda_{min}^{-1}||\mathrm d^\nabla u||_{L^2}^2$, we calculate
\begin{eqnarray*}
||\mathrm d^\nabla u_1||_{L^2}^2 &=& ||\mathrm d^\nabla u||_{L^2}^2  \\
&\leq&\langle (\mathrm d^{\nabla_1})^*\mathrm d^\nabla u, u_1\rangle_{L^2} + |\langle \mathrm d^\nabla u, [A_1,u_1]\rangle_{L^2}|\\
&\leq& ||\mathrm d^{\nabla_1} u_1||_{L^2} ||u_1||_{L^4}(||A_2||_{L^4} +||B_2||_{L^4}) +  |\langle \mathrm d^\nabla u, [A_1,u_1]\rangle_{L^2}|\\
&\leq& ||\mathrm d^{\nabla_1} u_1||_{L^2} ||u_1||_{L^4}(||A_2||_{L^4} +||B_2||_{L^4}) + 2||\mathrm d^\nabla u||_{L^2}||u_1||_{L^4}||A_1||_{L^4}\\
&\leq& (||\mathrm d^{\nabla} u_1||_{L^2}+2||u_1||_{L^4}||A_1||_{L^4}) ||u_1||_{L^4}(||A_2||_{L^4} +||B_2||_{L^4}) \\
& & + 2||\mathrm d^\nabla u||_{L^2}||u_1||_{L^4}||A_1||_{L^4}\\
&\leq& \kappa(||u_1||_{L^2_1}+2\kappa||u_1||_{L^2_1}||A_1||_{L^4}) ||u_1||_{L^2_1}(||A_2||_{L^4} +||B_2||_{L^4}) \\
& & + 2\kappa|| u_1||_{L^2_1}||u_1||_{L^2_1}||A_1||_{L^4}\\
&=&||u_1||_{L^2_1}^2\kappa(2||A_1||_{L^4} +(1+2\kappa||A_1||_{L^4})(||A_2||_{L^4} + ||B_2||_{L^4}))\\
&\leq& ||\mathrm d^\nabla u_1||_{L^2}^2(1+\lambda_{min}^{-1})\kappa(2||A_1||_{L^4} +(1+2\kappa||A_1||_{L^4})(||A_2||_{L^4} + ||B_2||_{L^4})),
\end{eqnarray*}
where $\kappa = \kappa(\nabla)$ is the constant from the Sobolev embedding $L^2_1\to L^4$.
Hence, if $(2||A_1||_{L^4} +(1+2\kappa||A_1||_{L^4})(||A_2||_{L^4} + ||B_2||_{L^4})) < ((1+\lambda_{min}^{-1})\kappa)^{-1}$ we conclude that $\mathrm d^\nabla u_1=0$, i.e. $u=1 \in \mathcal G_{k+1}^*$.
\end{proof}
Observe that the uniqueness-part of the proof only requires $L^4$-smallness. 

\begin{corollary}\label{product}
Every irreducible flat $L^2_k$-connection has a small neighbourhood on which a horizontal geodesic with flat endpoints is uniquely determined by the gauge equivalence classes of its endpoints. In particular, an open subset of the moduli space of solutions to the harmonic map equations can be identified with a neighbourhood of the diagonal in the product of the moduli space of flat connections with itself. 
\end{corollary}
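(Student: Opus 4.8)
The plan is to prove that the endpoint map
$$P_0\colon (\nabla,\phi)\longmapsto(\nabla+\phi,\ \nabla-\phi),$$
which is already known to be gauge equivariant and hence to descend to the moduli spaces, restricts to a \emph{bijection} from a neighbourhood of $[(\nabla_0,0)]$ in $\mathcal M$ onto a neighbourhood of $([\nabla_0],[\nabla_0])$ in $\mathcal N_k\times\mathcal N_k$, where $\nabla_0$ is the fixed irreducible flat connection. Injectivity of $P_0$ on the moduli space and the identification with a neighbourhood of the diagonal then both follow. Throughout I work inside a Coulomb slice for $\mathcal G^*_{k+1}$ around $\nabla_0$ provided by Proposition~\ref{slice}, shrinking its radius at the very end so that every invocation of Theorem~\ref{ball} below is legitimate.

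First I would build the inverse of $P_0$ by hand. Given two flat connections whose classes $[\nabla_1],[\nabla_2]$ are close to $[\nabla_0]$, represent them by connections $D_1,D_2$ in the slice and apply Theorem~\ref{ball} to obtain the unique $u\in\mathcal G^*_{k+1}$ with $u.D_1$ in Coulomb gauge relative to $D_2$; since $u$ depends continuously on $(D_1,D_2)$ and is trivial when $D_1=D_2=\nabla_0$, the connection $u.D_1$ stays close to $\nabla_0$. Set $\hat\nabla=\tfrac12(D_2+u.D_1)$, $\hat\phi=\tfrac12(u.D_1-D_2)$, and $\Theta([\nabla_1],[\nabla_2])=[(\hat\nabla,\hat\phi)]$. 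The two endpoints $\hat\nabla\pm\hat\phi$ are flat, and the Coulomb condition $(\mathrm d^{D_2})^*(u.D_1-D_2)=0$ together with the pointwise identity $[\hat\phi\wedge*\hat\phi]=0$ forces $(\mathrm d^{\hat\nabla})^*\hat\phi=0$, so by the reformulation of the harmonic map equations in terms of flat endpoints and a Coulomb condition this is a genuine solution with small Higgs field. That $\Theta$ is independent of the slice representatives, and that $P_0\circ\Theta=\mathrm{id}$ and $\Theta\circ P_0=\mathrm{id}$, is then a diagram chase whose sole ingredient is the \emph{uniqueness} clause of Theorem~\ref{ball}: whenever two connections are both in Coulomb gauge relative to the same $D_2$ and are related by a gauge transformation, that transformation is trivial in $\mathcal G^*_{k+1}$, from which the required equalities are read off. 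For $\Theta\circ P_0=\mathrm{id}$ one uses in addition that any solution $(\nabla,\phi)$ already has $\nabla+\phi$ in Coulomb gauge relative to $\nabla-\phi$ (again by $[\phi\wedge*\phi]=0$), so the gauge transformation $u$ produced by $\Theta$ is accounted for entirely by the transformations carrying $\nabla\pm\phi$ into the slice.

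Next I would match this bijection with the smooth open set of $\mathcal M$ constructed in Section~2. On one hand, if $(\nabla,\phi)$ has small Higgs field and a flat endpoint whose class lies near $[\nabla_0]$, then after a gauge transformation that endpoint lies near $\nabla_0$, and so does the other one, since the two endpoints differ by $2\phi$; on the other hand, $\Theta$ produces exactly such solutions. Since $\mathcal N_k$ is a smooth manifold near the class of an irreducible flat connection, of dimension $2(n^2(g-1)+1)$, the product $\mathcal N_k\times\mathcal N_k$ near the diagonal is smooth of dimension $4(n^2(g-1)+1)$, matching the dimension count for the smooth open set of $\mathcal M$ obtained earlier; smoothness of $\Theta$ and of $P_0$ follows from the implicit-function-theorem dependence in Theorem~\ref{ball} and Proposition~\ref{slice}. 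Because the product structure $S$ on $T^*\mathcal A_k^*$ is by definition the passage to the endpoints, this identification is precisely the local product structure associated with $S$ on the open set in question.

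The main obstacle is bookkeeping rather than a new estimate: one must arrange the neighbourhoods so that every use of Theorem~\ref{ball} — for the representatives $D_1,D_2$, for $u.D_1$, and for the competing Coulomb representatives that enter the uniqueness step — stays within the range where its constant is valid, which rests on knowing that the relevant gauge transformations (both those furnished by the slice theorem for nearby flat connections and the $u$ of Theorem~\ref{ball}) are $C^0$-small; this is exactly the continuity supplied by the implicit function theorem in the proofs already given. The one genuinely delicate point is that all uniqueness statements hold only modulo the centre $Z(\mathrm U(n))$, so the chain of identifications must be carried out consistently in the reduced gauge group $\mathcal G^*_{k+1}$ throughout.
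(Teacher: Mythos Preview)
Your proof is correct and rests on the same substance as the paper's: the uniqueness clause of Theorem~\ref{ball}, applied after bounding the $L^4$-norms of the gauged connections via $\mathrm{Ad}$-invariance. The organisation, however, differs. The paper argues only \emph{injectivity} of $P_0$: given two solutions $(\nabla,\phi)$ and $(\tilde\nabla,\tilde\phi)$ with $u_\pm.\nabla^\pm=\tilde\nabla^\pm$, it observes that both $\nabla^+$ and $u_-^{-1}u_+.\nabla^+$ lie in Coulomb gauge relative to $\nabla^-$, bounds the $L^4$-norm of $u_-^{-1}u_+.\nabla^+-\nabla^0$ in terms of $||A||_{L^4},||\phi||_{L^4},||\tilde A||_{L^4},||\tilde\phi||_{L^4}$ (using $u_\pm^{-1}\mathrm d^{\nabla^0}u_\pm=\tilde A\pm\tilde\phi-u_\pm^{-1}(A\pm\phi)u_\pm$ and $\mathrm{Ad}$-invariance), and then invokes uniqueness to conclude $u_+=u_-$. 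Surjectivity is left implicit, following from the existence of the Coulomb gauge (Proposition~\ref{slice}) and the dimension count $4(n^2(g-1)+1)=2\dim\mathcal N_k$.

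You instead build the inverse $\Theta$ by hand and check both compositions, which has the advantage of making surjectivity explicit and independent of the dimension computation. Your verification that $\Theta\circ P_0=\mathrm{id}$ is the same injectivity argument in disguise: the competing Coulomb representative $(v_-v_+^{-1}).D_1=v_-.\nabla^+=D_2+2v_-^{-1}\phi v_-$ has controlled $L^4$-norm by $\mathrm{Ad}$-invariance (this is exactly what the paper computes), and uniqueness then forces $u=v_-v_+^{-1}$. One small remark: you do not actually need the gauge transformations to be $C^0$-small, since $\mathrm{Ad}$-invariance of the $L^4$-norm already controls everything; the paper's proof uses only this.
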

\begin{proof}
Suppose $\nabla^0\in\mathcal A_k^*$ is flat and let $(\nabla,\Phi)$ be a solution to the harmonic map equations with associated pair of flat connections $\nabla^\pm = \nabla \pm \phi$. Write $\nabla = \nabla^0+A$. Let $(\tilde\nabla,\tilde\Phi)$ be another solution to the harmonic map equations such that $\tilde\nabla^\pm = \tilde\nabla\pm\tilde\phi$ is gauge equivalent to $\nabla^\pm$. That is, there exist gauge transformations $u_+$ and $u_-$ such that 
$$u_\pm.\nabla^\pm = \tilde\nabla^\pm.$$
Since $\nabla^+$ (respectively $\tilde\nabla^+$) is in Coulomb gauge with respect to $\nabla^-$ (respectively $\tilde\nabla^-$), this means that $(u_-^{-1}).\tilde\nabla^+$ is in Coulomb gauge with respect to $u_-^{-1}.\tilde\nabla^- = \nabla^-$. Thus, $\nabla^-$ is in Coulomb gauge with respect to both $(u_-^{-1}).u_+.\nabla^+$ and $\nabla^+$. 

In analogy to the notation in the proof of theorem \ref{ball}, we write $\nabla^-=\nabla^0+A_1$, $\nabla^+ = \nabla^0+A_2$ and $u_-^{-1}.u_+.\nabla^+ = \nabla^0 + B_2$. Then we have 
\begin{eqnarray*}
A_1&=& A-\phi,\\
A_2 &=& A+\phi,\\
B_2 &=& u_-(u_+^{-1}(A+\phi)u_+ + u_+^{-1}\mathrm d^{\nabla^0}u_+)u_-^{-1} + u_-\mathrm d^{\nabla^0}u_-^{-1}.
\end{eqnarray*}
Since the $L^4$-norm is $\mathrm{Ad}$-invariant, we therefore have
$$||B_2||_{L^4} \leq ||A+\phi||_{L^4} + ||u_+^{-1}\mathrm d^{\nabla_0}u_+||_{L^4} + ||u_-^{-1}\mathrm d^{\nabla^0}u_-||_{L^4}.$$
On the other hand, the condition $u_\pm.\nabla^\pm = \tilde\nabla^\pm.$ implies that 
$$u_\pm^{-1}\mathrm d^{\nabla^0}u_\pm = \tilde A\pm\tilde\phi - u_\pm^{-1}(A\pm\phi)u_\pm.$$
Thus, 
$$||B_2||_{L^4} \leq 2||A+\phi||_{L^4} + ||\tilde A+\tilde \phi||_{L^4} + ||\tilde A-\tilde\phi||_{L^4} + ||A-\phi||_{L^4}.$$
Hence, if $||\tilde A||_{L^4}, ||\tilde\phi||_{L^4}, ||A||_{L^4}, ||\phi||_{L^4}$ are sufficiently small, we can ensure that the conditions at the end of the  proof of theorem \ref{ball} are satisfied and therefore conclude
$$u_+=u_-,$$
which implies that $(\nabla,\Phi)$ and $(\tilde\nabla, \tilde\phi)$ are gauge equivalent as solutions to the harmonic map equations.
\end{proof}

We have seen that geodesics on $\mathcal A_k^*/\mathcal G_{k+1}^*$ become unique once they are short enough (with respect to the $L^4$-norm). Thus, the space of horizontal geodesics with flat endpoints locally looks like a product of a small ball in the moduli space of flat connections with itself. This subset is open in the harmonic map moduli space, since we know that near an irreducible flat connection the moduli space has dimension $4(n^2(g-1)+1)$, which equals twice the dimension of the moduli space of irreducible flat unitary connections. 

Our argument shows that a subset in the moduli space of solutions to the harmonic map equations, on which the hypersymplectic structure is non-degenerate, can be identified with a neighbourhood of the diagonal in the product of the moduli space of flat irreducible connections with itself. This is in general an open set in the moduli space of solutions to the harmonic map equations. Actually, we have shown that any two irreducible flat connections which lie in a sufficiently small open ball inside $\mathcal A_k^*$ determine up to simultaneous gauge transformations a unique horizontal geodesic linking their gauge equivalence classes.

\subsection{Conjugate Points and the Degeneracy Locus}
Recall that on a Riemannian manifold a Jacobi field is a tangent vector to the space of geodesics, i.e. given a geodesic $\gamma$ , a Jacobi field is a vector field along $\gamma$ of the form 
$$ J(t) = \frac{\mathrm d}{\mathrm ds}|_{s=0} \gamma_s(t),$$
where $\gamma_s$ is a family of geodesics such that $\gamma = \gamma_0$. Two points on a geodesic are \emph{conjugate} if there exists a non-trivial Jacobi field along $\gamma$ which vanishes at these two points. 
\begin{theorem}
Let $\gamma(t)$, $t\in[0,1]$ be a horizontal geodesic segment with flat endpoints, such that the connection $\gamma(1/2)$ is irreducible. If  the endpoints of $\gamma$ are conjugate,  then the hypersymplectic structure on the moduli space of harmonic maps is degenerate at $\gamma$. 
\end{theorem}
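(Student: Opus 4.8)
The plan is to translate the Riemannian notion of conjugacy at the endpoints of $\gamma$ into the statement that the degeneracy operator $\mathcal D_1^\dagger \mathcal D_1$ has non-trivial kernel at the corresponding solution $(\nabla,\phi)$ of the harmonic map equations, and then invoke the proposition characterising the degeneracy locus. Write $\gamma(t) = \nabla^- + t\cdot 2\phi$ on $\mathcal A_k^*$, so that $\nabla = \nabla^- + \phi = \gamma(1/2)$, $\nabla^+ = \gamma(1)$, and $\phi \in \ker(\mathrm d^\nabla)^*$ is the horizontal velocity. The first step is to compute the Jacobi operator of the Riemannian submersion $\mathcal A_k^*/\mathcal G_{k+1}^* \to $ pt along the horizontal geodesic $\gamma$. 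Since $\mathcal A_k^*$ is flat, a Jacobi field on the quotient is the projection of a field $J(t)$ on $\mathcal A_k^*$ along $\gamma$ satisfying a second-order equation involving the O'Neill tensor / the second fundamental form of the gauge orbits; concretely, Jacobi fields vanishing at $t = 0$ and $t = 1$ correspond to variations $\gamma_s$ of $\gamma$ through horizontal geodesics whose endpoints stay on fixed gauge orbits in $\mathcal N_k$. I would set up this variational family explicitly: a variation of $\gamma$ is $\gamma_s(t) = (\nabla^- + sB_-) + t\,2(\phi + s\Psi)$ modulo gauge, and the conditions that (i) it remains horizontal, (ii) its endpoints remain flat, and (iii) the endpoint gauge classes are unchanged, cut down the variations to exactly a linear space which I claim is $H^1$ of the deformation complex paired with the $\pm 1$-eigenspace decomposition coming from $S$.

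The key computation is to identify the linearised operator. The harmonic map equations in the symmetric form $F^{\nabla^+}=0$, $F^{\nabla^-}=0$, $(\mathrm d^\nabla)^*\phi = 0$ linearise, after choosing the Coulomb slice at $\nabla$, to a system whose solution space is $\ker(\mathcal D_1^* \oplus \mathrm dG_{(\nabla,\phi)})$. The Jacobi equation for the endpoints being conjugate says precisely that there is a non-zero variation field $J$ with $J(0) = J(1) = 0$ as points of the moduli space of flat connections, i.e. the variations of the two endpoints are pure infinitesimal gauge transformations $\mathrm d^{\nabla^\pm}\xi_\pm$. Tracking this through $P_0$ (which by the earlier proposition intertwines $S$ with $\mathrm{diag}(-1,1)$ and sends $(\nabla,\phi)$ to $(\nabla^+,\nabla^-)$), the condition $J(0)=J(1)=0$ forces the infinitesimal deformation to be tangent to the sum of the two gauge orbits $T\mathcal O_{\nabla^+} + T\mathcal O_{\nabla^-}$, while horizontality and the slice condition force it to be $\mathcal D_1\xi$ for some $\xi$ lying in $\ker \mathcal D_1^\dagger$ — that is, exactly the content of $(\mathrm d^\nabla)^*\mathrm d^\nabla \xi + *[\phi \wedge *[\phi,\xi]] = 0$, or equivalently $(\mathrm d^{\nabla^-})^*\mathrm d^{\nabla^+}\xi = 0$ as in the proof of the earlier degeneracy proposition. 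The irreducibility hypothesis on $\gamma(1/2) = \nabla$ is used to guarantee that a non-trivial Jacobi field produces a \emph{non-central} $\xi$, i.e. a genuinely non-zero element of $\ker \mathcal D_1^\dagger \mathcal D_1$ inside $\mathrm{Lie}(\mathcal G_{k+1}^*)$, so that the degeneracy is not an artifact of the constant-scalar stabiliser.

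The main obstacle I anticipate is the bookkeeping in the second step: carefully matching the Riemannian Jacobi equation on the quotient $\mathcal A_k^*/\mathcal G_{k+1}^*$ (with its O'Neill correction terms, since the submersion is not Riemannian-flat downstairs even though $\mathcal A_k^*$ is) against the gauge-theoretic linearisation, and checking that "Jacobi field vanishing at both endpoints" is equivalent to "$\mathcal D_1\xi \in \ker\mathcal D_1^\dagger$ for non-central $\xi$" rather than merely implying one direction. I would handle this by working on $\mathcal A_k^*$ itself, where geodesics are straight lines and the O'Neill tensor is encoded purely in the projection onto $\ker(\mathrm d^\nabla)^*$; a variation through horizontal geodesics is then $\nabla^- + sB_- + 2t(\phi + s\Psi)$ with the horizontality constraint $(\mathrm d^{\nabla^-+sB_-})^*(\phi+s\Psi)=0$, and the endpoint conditions become $B_- = \mathrm d^{\nabla^-}\xi_-$ and $B_- + 2\Psi = \mathrm d^{\nabla^+}\xi_+$. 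Eliminating $\Psi$ and using flatness of $\nabla^\pm$ together with $(\mathrm d^\nabla)^*\phi = 0$ reduces these to a single equation for $\eta := \tfrac12(\xi_+ - \xi_-)$ (after the gauge fixing) which is $(\mathrm d^{\nabla^-})^*\mathrm d^{\nabla^+}\eta = 0$; non-triviality of the Jacobi field together with irreducibility of $\nabla$ then gives $\eta \neq 0$ in $\mathrm{Lie}(\mathcal G_{k+1}^*)$, and the earlier proposition identifies this $\eta$ as certifying that $(\nabla,\phi)$ lies in the degeneracy locus. Once the identification is in place the conclusion is immediate.
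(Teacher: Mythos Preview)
Your proposal is correct and follows essentially the same route as the paper. Both arguments start from a variation $\gamma_s$ through horizontal geodesics whose endpoint variations are pure gauge, $Y(0)=\mathrm d^{\nabla^-}\xi^-$ and $Y(1)=\mathrm d^{\nabla^+}\xi^+$, and extract from this the element $\xi^+-\xi^-$ satisfying $(\mathrm d^{\nabla^-})^*\mathrm d^{\nabla^+}(\xi^+-\xi^-)=0$, which is the degeneracy equation. The paper reaches this by rewriting the midpoint variation as $2(\dot A,\dot\phi)=X^{\xi^++\xi^-}+SX^{\xi^+-\xi^-}$ and observing that since $(\dot A,\dot\phi)$ and $X^{\xi^++\xi^-}$ are tangent to the solution space, so is $SX^{\xi^+-\xi^-}$, which forces the degeneracy equation; your concrete plan in the last paragraph obtains the same equation by linearising the horizontality constraint directly, and this reduction indeed goes through (in fact without needing any gauge-fixing step, and using only $\mathrm d^{\nabla^-}*\phi=0$ rather than flatness of $\nabla^\pm$). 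One small slip: in your middle paragraph you say the deformation is ``$\mathcal D_1\xi$ for some $\xi\in\ker\mathcal D_1^\dagger$''; what actually happens is that the deformation decomposes as $\mathcal D_1\alpha + S\mathcal D_1\beta$ with $\beta=\tfrac12(\xi_+-\xi_-)$, and it is $\beta$ that lands in $\ker\mathcal D_1^\dagger\mathcal D_1$ --- but your final paragraph gets this right.
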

The endpoints of $\gamma$ being conjugate means that  there exists  a one-parameter family  $\gamma(s,t)$ of horizontal geodesics with flat endpoints, such that  $\gamma(t) = \gamma(0,t)$ and the endpoints are gauge-equivalent for all $s$. Equivalently, there exists a Jacobi field along $\gamma$ which is tangent to the gauge orbits through the endpoints of $\gamma$ .
\begin{proof}
Let $\nabla^- = \gamma(0,0)$ and $\nabla^+ = \gamma(0,1)$.
Consider the Jacobi field $$Y(t) = \frac{\partial}{\partial s}|_{s=0}\gamma(s,t).$$ Then by assumption, $Y(0)$ and $Y(1)$ are tangent to the gauge orbit through $\gamma(0)$ and $\gamma(1)$ respectively. This means, there are Lie algebra elements $\xi^\pm \in \Gamma(\mathfrak u(E))$ such that
$$Y(0) = \mathrm d^{\nabla^-}\xi^- \qquad Y(1) = \mathrm d^{\nabla^+}\xi^+.$$
Tracking through our correspondence, we write $\nabla = \frac{1}{2}(\nabla^+ + \nabla^-)$, $\phi = \frac{1}{2}(\nabla^+ -\nabla^-) = \Phi - \Phi^*$. In other words
$$\gamma(s,t) = \nabla(s) +(2t-1)\phi(s),$$
and so $\nabla = \nabla(0)$ and $\phi = \phi(0)$. We write $\nabla(s) = \nabla + A(s)$.
In this notation 
$$Y(0) = \dot A - \dot \phi \qquad Y(1) = \dot A + \dot \phi.$$
This gives
\begin{eqnarray*}
\dot A - \dot \phi &=& \mathrm d^\nabla \xi^- - [\Phi-\Phi^*,\xi^-] \\
\dot A + \dot \phi&=& \mathrm d^\nabla \xi^+ + [\Phi-\Phi^*,\xi^+].
\end{eqnarray*}
It follows that
\begin{eqnarray*}
2\dot A &=& \mathrm d^\nabla(\xi^+ + \xi^-) + [\Phi-\Phi^*, \xi^+-\xi^-]\\
2\dot \phi &=&\mathrm d^\nabla(\xi^+ - \xi^-) + [\Phi-\Phi^*, \xi^++\xi^-]. 
\end{eqnarray*}
This means, that in complex co-ordinates $2(\dot A, \dot \phi)$ is represented by the point
\begin{eqnarray*}
(2\dot A^{0,1}, 2\dot \phi^{1,0}) &=& (\bar\partial^\nabla (\xi^++\xi^- -[\Phi^*, \xi^+-\xi^-], \partial^\nabla(\xi^+-\xi^-) +[\Phi, \xi^++\xi^-])\\
&=&(\bar\partial^\nabla (\xi^++\xi^- , [\Phi, \xi^++\xi^-]) + ( -[\Phi^*, \xi^+-\xi^-], \partial^\nabla(\xi^+-\xi^-))\\
&=& X^{\xi^++\xi^-} + SX^{\xi^+-\xi^-}. 
\end{eqnarray*}
It is then easy to check that $\xi = \xi^+ - \xi^-$ defines an element of the degeneracy space at $(\nabla,\phi)$. Indeed, since $(\dot A,\dot \phi)$ is tangent to the moduli space as well as $X^{\xi^++\xi^-}$, the vector $SX^\xi$ has to be tangent to the solution space, too. This means it solves the linearised harmonic map equations, which in this situation precisely yield the degeneracy equation for $\xi$.

This shows that if the endpoints of a geodesic segment corresponding to a solution to the harmonic map equations are \emph{conjugate}, then the hypersymplectic structure is degenerate at this point of the moduli space. In this way, we see that flat connections contained in the image of the \emph{cut locus} of the Riemannian manifold $\mathcal A_k^*/\mathcal G_{k+1}^*$ under the exponential map will also be contained in the degeneracy locus of the hypersymplectic structure.
\end{proof}

On a neighbourhood of the moduli space of flat connections a converse to this theorem is true.

\begin{proposition}
If $(\nabla,\phi)$ is a solution to the harmonic map equations, then the hypersymplectic structure is non-degenerate at this point in the moduli space $\mathcal M$, provided that $(\nabla,\phi)$ is sufficiently close to a flat irreducible connection.
\end{proposition}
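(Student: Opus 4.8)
The plan is to combine the two preceding propositions. One of them asserts that if either of the flat connections $\nabla_\theta^{\pm}=\nabla\pm e^{*\theta}\phi$ is reducible then $(\nabla,\phi)$ lies in the degeneracy locus; Proposition \ref{smallnondeg} is a partial converse, saying that \emph{when the Higgs field is small} a nontrivial solution of the degeneracy equation $\mathcal D_1^\dagger\mathcal D_1\xi=0$ can occur only if $\nabla_\theta^{+}$ is reducible. Fixing $\theta=0$, the strategy is: show that for $(\nabla,\phi)$ sufficiently close to a flat irreducible connection $\nabla^{0}$, the pair $\nabla^{\pm}=\nabla\pm\phi$ stays irreducible and $\|\phi\|_{L^4}$ stays below the threshold $C$ of Proposition \ref{smallnondeg}; then the only solution of the degeneracy equation is the trivial one, so $\ker(\mathcal D_1^\dagger\mathcal D_1)$ is reduced to $i\mathbb R\,\mathrm{id}_E$, which by the proposition characterising the degeneracy locus means precisely that the hypersymplectic structure is non-degenerate at $[(\nabla,\phi)]$.

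The delicate point is that the constant $C$ in Proposition \ref{smallnondeg} is not universal: its proof produces $C=(4\kappa(\nabla^{+})(1+\lambda_{\min}(\nabla^{+})^{-1}))^{-1}$, depending on the Sobolev constant $\kappa(\nabla^{+})$ for $L^2_1\hookrightarrow L^4$ and on the bottom $\lambda_{\min}(\nabla^{+})$ of the positive spectrum of $(\mathrm d^{\nabla^{+}})^{*}\mathrm d^{\nabla^{+}}$. So the first step is a uniformity statement over a neighbourhood $U$ of $\nabla^{0}$ in $\mathcal A_k$. Since $\dim\ker(\mathrm d^{\nabla})^{*}\mathrm d^{\nabla}$ is upper semicontinuous in $\nabla$, we may take $U$ to consist of irreducible connections; on $U$ this kernel then has the minimal, hence constant, dimension, so the spectral gap $\lambda_{\min}(\cdot)$ varies \emph{continuously} over $U$, and after shrinking $U$ we get $\lambda_{\min}(\tilde\nabla)\geq\delta>0$ there. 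The constants $\kappa(\tilde\nabla)$ are uniformly bounded on $U$ because the $\tilde\nabla$-Sobolev norms are uniformly equivalent to the one attached to the fixed background connection. Hence the constant of Proposition \ref{smallnondeg} may be taken to be a single $C_0>0$ valid for all $\tilde\nabla\in U$, in particular whenever $\nabla^{+}\in U$.

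With this the argument closes quickly. Interpret ``sufficiently close to $\nabla^{0}$'' so that a gauge representative $(\nabla,\phi)$ has $\nabla^{\pm}=\nabla\pm\phi\in U$ --- possible because $\nabla$ is close to $\nabla^{0}$ and, via $L^2_k\hookrightarrow L^4$ on the surface, $\phi$ is $L^4$-small --- and $\|\phi\|_{L^4}<C_0$; since the degeneracy locus is gauge invariant, working with this representative is harmless. If $(\nabla,\phi)$ were in the degeneracy locus there would be a nontrivial $\xi\in\mathrm{Lie}(\mathcal G_{k+1}^{*})$ with $(\mathrm d^{\nabla})^{*}\mathrm d^{\nabla}\xi+*[\phi\wedge*[\phi,\xi]]=0$; as $\|\phi\|_{L^4}<C_0\leq C(\nabla^{+})$, Proposition \ref{smallnondeg} with $\theta=0$ would force $\nabla^{+}$ to be reducible, contradicting $\nabla^{+}\in U$. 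So no such $\xi$ exists and the hypersymplectic structure is non-degenerate at $[(\nabla,\phi)]$.

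The main obstacle is the uniformity step of the second paragraph: Proposition \ref{smallnondeg} as stated only gives a pointwise constant, and one genuinely needs continuity --- not merely semicontinuity --- of the first positive eigenvalue $\lambda_{\min}(\nabla^{+})$ near $\nabla^{0}$. This is available exactly because the irreducibility of $\nabla^{0}$ pins the kernel dimension of the relevant Laplacian to its minimal value, so no eigenvalue can collapse to zero under a small perturbation. The remaining ingredients --- openness of irreducibility, equivalence of Sobolev norms for nearby connections, and the $L^4$-smallness of $\phi$ --- are routine.
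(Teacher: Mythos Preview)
Your argument is correct and follows the same overall plan as the paper: reduce non-degeneracy to irreducibility of $\nabla^{+}$ and then invoke Proposition~\ref{smallnondeg}. The technical execution differs slightly. You establish irreducibility of $\nabla^{+}$ by the abstract route of openness (upper semicontinuity of $\dim\ker$) and then work to make the constant in Proposition~\ref{smallnondeg} uniform over a neighbourhood via continuity of $\lambda_{\min}$ and equivalence of Sobolev norms. The paper instead proves irreducibility of $\nabla^{+}$ by a direct estimate anchored at the fixed $\nabla^{0}$: if $\eta\in\ker\mathrm d^{\nabla^{+}}$ and $\|A+\phi\|_{L^{4}}<(\kappa(1+\lambda_{\min}(\nabla^{0})^{-1}))^{-1}$, then the same rearrangement trick as in Proposition~\ref{smallnondeg} forces $\eta\in\ker\mathrm d^{\nabla^{0}}$, hence $\eta$ is central. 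This has the advantage that the governing constant depends only on the fixed $\nabla^{0}$, so no uniformity argument is needed for that step. On the other hand, the paper then invokes Proposition~\ref{smallnondeg} with its $\nabla^{+}$-dependent constant and simply writes ``if $\|\phi\|_{L^{4}}$ is sufficiently small'' without spelling out the uniformity; your treatment of that point is in fact more careful than the paper's. Either way the argument goes through.
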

\begin{proof}
Let $(\nabla,\phi)$ be a solution to the harmonic map equations and let $\nabla^0$ be an irreducible flat connection, so that we can write $\nabla = \nabla^0+A$. As before, we denote by $\lambda_{min}(\nabla^0)$ be the smallest non-zero eigenvalue of $(\mathrm d^{\nabla^0})^*\mathrm d^{\nabla^0}$ acting on sections of $\mathfrak u(E)$. By proposition \ref{smallnondeg} we have to show that $\nabla^+=\nabla+\phi$ is irreducible. Assume that $\eta \in L_2^2(\mathfrak u(E))$ lies in the kernel of $\nabla^+$, i.e.
$$\mathrm d^{\nabla^+} \eta = 0.$$
Then an argument similar to the one in the proof of \ref{smallnondeg} shows that $\eta$ has to lie in the kernel of $\nabla^0$ already, provided $||A+\phi||_{L^4}< (\kappa(1+\lambda_{min}(\nabla^0)^{-1}))^{-1}$. This then implies that $\eta$ has to vanish in $\mathrm{Lie}(\mathcal G^*)$ and so $\nabla^+$ is irreducible. By proposition \ref{smallnondeg} this means that the point $(\nabla,\phi)$ does not belong to the degeneracy locus, if $||\phi||_{L^4}$ is sufficiently small. 
\end{proof}

\section{Conclusion and Final Comments}

As we have already remarked, with minor modifications the proofs above still work if we replace $U(n)$ by an arbitrary compact Lie group $G$.
We have shown that there exists a hypersymplectic structure on a suitable neighbourhood of the moduli space of irreducible flat $G$-connections inside the moduli space of solutions to the gauge theoretic harmonic map equations. 

We have seen that with a solution of the equations we can associate in a natural way a pair of flat unitary connections over the compact Riemann surface $M$.
Locally, the two flat connections are \emph{trivial}, and hence they have to be gauge equivalent by a local gauge transformation. That is, in a parallel trivialisation (with respect to $\nabla^+$ say), their difference, which equals $2\phi$, has to be of the form $u^{-1}\mathrm du$, for a smooth map $u: U\subset M \to G$, which as a consequence of the holomorphicity of $\Phi$ is \emph{harmonic}. This construction produces a harmonic section of the flat bundle of groups associated with the action of $G \times G$ on $G$ given by $(a,b).u = a^{-1}ub$ equipped with the product connection $(\nabla^+,\nabla^-)$. In other words, the moduli space parametrises harmonic sections of flat $G\times G$-bundles. For zero Higgs field the two connections coincide and there are no non-trivial harmonic sections due to the irreducibility of the flat connection. Thus, we have the moduli space of irreducible flat connections embedded in a natural way.  

In fact, it follows from theorem \ref{ball} that we may naturally interpret this open set inside the moduli space of solutions to the harmonic map equations as the \emph{paracomplexification} of the moduli space of flat $G$-connections. The local product structure $S$ exhibits this open set as a neighbourhood of the diagonal inside the product of the moduli space of flat $G$-connections with itself.  This is the split signature analogue of the fact that the Higgs bundle moduli space with the complex structure $J$ is the moduli space of flat $g^\mathbb C$-connections. Note that $G\times G$ is the paracomplexification of $G$, and so we see that the analogy is actually very close of the case of Higgs bundles: With respect to the paracomplex structure $S$ the open set inside the moduli space describes flat paracomplex connections. 

\section*{Acknowledgements}
This work is based on parts of the author's DPhil thesis \cite{Roeser:2012}.
Special thanks are due to Prof. Andrew Dancer for suggesting this DPhil project and for his support, guidance and encouragement over the past three years.
The author is grateful to  Prof. Nigel Hitchin and Prof. Lionel Mason for a number of useful remarks. The author would also like to thank the anonymous referee for useful comments that helped to improve the manuscript. Most of this work was carried out at the Mathematical Institute of the University of Oxford supported by a DPhil studentship of the Engineering and Physical Sciences Research Council. The author  wishes to thank the University of M\"unster and the SFB 878 ``Groups, Geometry and Actions" for research support.

\bibliographystyle{elsarticle-num}

\begin{thebibliography}{1}

\bibitem{AtiyahBott:1982}
M.~F. Atiyah and R.~Bott.
\newblock The {Y}ang-{M}ills equations over {R}iemann surfaces.
\newblock {\em Philos. Trans. Roy. Soc. London Ser. A}, 308:523--615, 1982.

\bibitem{DancerSwann:2008}
A.S. Dancer and A.~Swann.
\newblock Hypersymplectic manifolds.
\newblock In {\em Recent developments in pseudo-Riemannian geometry}, ESI Lect.
  Math. Phys.,, pages 97--111, Z{\"u}rich, 2008. Eur. Math. Soc.

\bibitem{DonaldsonKronheimer:1990}
S.~K. Donaldson and P.~B. Kronheimer.
\newblock {\em The Geometry of Four-Manifolds.}
\newblock Oxford Mathematical Monographs. The Clarendon Press, Oxford
  University Press, New York, 1990.

\bibitem{Hitchin:1987}
N.~J. Hitchin.
\newblock The self-duality equations on a {R}iemann surface.
\newblock {\em Proc. London Math. Soc. (3)}, 55(1):59--126, 1987.

\bibitem{Hitchin:1990}
N.~J. Hitchin.
\newblock Harmonic maps from a $2$-torus to the $3$-sphere.
\newblock {\em J. Differential Geom.}, 31(3):627--710, 1990.

\bibitem{Hitchin:1990a}
N.~J. Hitchin.
\newblock {H}ypersymplectic quotients.
\newblock {\em Acta Acad. Sci. Tauriensis}, 124:169--180, 1990.

\bibitem{Roeser:2012}
M.~R\"oser.
\newblock {\em The ASD {E}quations in {S}plit {S}ignature and {H}ypersymplectic
  {G}eometry}.
\newblock DPhil thesis, University of Oxford, 2012.

\bibitem{Simpson:1991}
C.~T. Simpson.
\newblock Nonabelian {H}odge theory.
\newblock In {\em Proceedings of the International Congress of Mathematicians,
  Vol. I, II (Kyoto, 1990)}, pages 747--756, Tokyo, 1991. Math. Soc. Japan.

\end{thebibliography}

\end{document}